\theoremstyle{plain}
	\newtheorem{theorem}{Theorem}
	\newtheorem{proposition}[theorem]{Proposition}
	\newtheorem{lemma}[theorem]{Lemma}
\theoremstyle{definition}
	\newtheorem{definition}[theorem]{Definition}
\theoremstyle{remark} 
	\newtheorem{remark}[theorem]{Remark}
	\newtheorem{example}[theorem]{Example}
\begin{document}

\title[Division gradings on finite-dimensional simple real algebras]{Classification of division gradings on finite-dimensional simple real algebras}
\author{Adri\'an Rodrigo-Escudero}
\date{December 22, 2015}
\address{Departamento de Matem\'aticas e Instituto Uni\-ver\-si\-ta\-rio de Matem\'aticas y Aplicaciones, Universidad de Zaragoza, 50009 Zaragoza, Spain.}
\email{adrian.rodrigo.escudero@gmail.com}
\subjclass[2010]{Primary 16W50; secondary 16K20, 16S35.}
\keywords{Graded algebra; division grading; simple real algebra; classification.}
\thanks{\texttt{http://dx.doi.org/10.1016/j.laa.2015.11.025} This is the accepted manuscript (it includes the referee's suggestions) of the article published in Linear Algebra and its Applications.}

\begin{abstract}
We classify, up to isomorphism and up to equivalence, division gradings (by abelian groups) on finite-dimensional simple real algebras.
Gradings on finite-dimensional simple algebras are determined by division gradings, so our results give the classification, up to isomorphism, of (not necessarily division) gradings on such algebras.

Linear algebra over the field of two elements plays an interesting role in the proofs.
\end{abstract}

\maketitle

\section{Introduction}\label{sec-intr}

Consider a finite-dimensional simple real algebra (here by algebra we always mean associative algebra).
The Theorem of Artin--Wedderburn states that it is isomorphic to a matrix algebra over a finite-dimensional division real algebra.
On the other hand, Frobenius' Theorem characterizes the latter; it says that any finite-dimensional division real algebra is isomorphic to $\mathbb{R}$, $\mathbb{C}$ or $\mathbb{H}$.
Both theorems assure the uniqueness, so they classify these algebras.

A graded algebra is said to be a graded division algebra if it is unital and every nonzero homogeneous element has an inverse.
In \cite[Corollary 2.12]{EK} (see also \cite{BSZ,BZ02,NO}) an analogue of Artin--Wedderburn's Theorem for graded simple algebras is proved: they are, roughly speaking, graded matrix algebras over graded division algebras.
This gives a classification up to isomorphism of gradings.
The difficulty of studying the equivalence classes if the grading is not fine is illustrated in \cite[Examples 2.40 and 2.41]{EK}.
The classification will be completed if an analogue of Frobenius' Theorem for gradings is given, that is, if the graded division algebras are classified.
This is done in \cite[Theorem 2.15]{EK} for the case in which the ground field is algebraically closed; in this paper we will do it for the real case.
Therefore, the results in this text give a complete classification of the isomorphism classes of gradings on finite-dimensional simple real algebras.

The goal of this text is to classify, up to isomorphism and up to equivalence, division gradings (by abelian groups) on finite-dimensional simple real algebras.
This is achieved in Theorems \ref{thm-MnR-dim1}, \ref{thm-MnC-dim1}, \ref{thm-dim4}, \ref{thm-MnR-dim2} and \ref{thm-MnC-dim2}.
We advance in Section \ref{sec-equiv} a list of the equivalence classes.
The center of the algebra plays a key role; this is the reason to distinguish between the cases of $M_n(\mathbb{R})$ and $M_n(\mathbb{H})$ and the case of $M_n(\mathbb{C})$.

The homogeneous components of these gradings have the same dimension, which is restricted to $1$, $2$ or $4$ (see Section \ref{sec-back-grad}).
The former case is analyzed in Section \ref{sec-fine}.
The situation is very similar to that of the complex field, but the isomorphism classes are in correspondence with quadratic forms over the field of two elements, instead of alternating bicharacters, because of the lack of roots of unity in the real numbers.
Section \ref{sec-quad-form} is devoted to quadratic forms.
As a curiosity, we obtain an alternative proof that the Arf invariant over the field of two elements is well defined, working with algebras in characteristic $0$.

Finally in Section \ref{sec-coar} we study the remaining case of homogeneous components of dimension $2$ or $4$.
We have two tools to reduce the problem to the previous case.
The first is Proposition \ref{prop-ref}, which states that these gradings are not fine.
The second is the Double Centralizer Theorem.

In the recent preprint \cite{BZ15} (simultaneous to our preprint \cite{Rodrigo}), a classification up to equivalence of the division gradings on finite-dimensional simple real algebras has been considered.
We solve, using different methods, the problem of classification both up to equivalence and up to isomorphism.

\section{Background on gradings}\label{sec-back-grad}

In this section we review, following \cite{EK}, the basic definitions and properties of gradings that will be used in the rest of the paper.

\begin{definition}\label{def-grad}
Let $\mathcal{D}$ be an algebra over a field $\mathbb{F}$, and let $G$ be a group.
A \emph{$G$-grading} $\Gamma$ on $\mathcal{D}$ is a decomposition of $\mathcal{D}$ into a direct sum of subspaces indexed by $G$:
\[ \Gamma : \mathcal{D} = \bigoplus_{ g \in G } \mathcal{D}_g \]
So that for all $ g,h \in G$:
\[ \mathcal{D}_g \mathcal{D}_h \subseteq \mathcal{D}_{gh} \]
If such a decomposition is fixed, we refer to $\mathcal{D}$ as a \emph{$G$-graded algebra}.
The \emph{support} of $\Gamma$ is the set $ \mathrm{supp} (\Gamma) := \{ g \in G \mid \mathcal{D}_g \neq 0 \} $.
If $ X \in \mathcal{D}_g $, then we say that $X$ is \emph{homogeneous of degree $g$}, and we write $ \deg (X) = g $.
The subspace $\mathcal{D}_g$ is called the \emph{homogeneous component of degree $g$}.
\end{definition}

Note that, if $\mathcal{D}$ is a $G$-graded algebra and $\mathcal{D}'$ is an $H$-graded algebra, then the tensor product $ \mathcal{D} \otimes \mathcal{D}' $ has a natural $ G \times H $-grading given by $ ( \mathcal{D} \otimes \mathcal{D}' )_{(g,h)} = \mathcal{D}_g \otimes \mathcal{D}'_h $, for all $ g \in G $, $ h \in H $.
This will be called the \emph{product grading}.

A subalgebra $\mathcal{F}$ of a $G$-graded algebra $\mathcal{D}$ is said to be \emph{graded} if $ \mathcal{F} = \bigoplus_{ g \in G } ( \mathcal{D}_g \cap \mathcal{F} ) $.

There are two natural ways to define an equivalence relation on group gradings, depending on whether the grading group plays a secondary role or it is a part of the definition.

\begin{definition}\label{def-equiv}
Let $\Gamma$ be a $G$-grading on the algebra $\mathcal{D}$ and let $\Gamma'$ be an $H$-grading on the algebra $\mathcal{D}'$.
We say that $\Gamma$ and $\Gamma'$ are \emph{equivalent} if there exist an isomorphism of algebras $ \varphi : \mathcal{D} \longrightarrow \mathcal{D}' $ and a bijection $ \alpha : \mathrm{supp} (\Gamma) \longrightarrow \mathrm{supp} (\Gamma') $ such that $ \varphi (\mathcal{D}_t) = \mathcal{D}'_{\alpha(t)} $ for all $ t \in \mathrm{supp} (\Gamma) $.
\end{definition}

\begin{definition}\label{def-isom}
Let $\Gamma$ and $\Gamma'$ be $G$-gradings on the algebras $\mathcal{D}$ and $\mathcal{D}'$, respectively.
We say that $\Gamma$ and $\Gamma'$ are \emph{isomorphic} if there exists an isomorphism of algebras $ \varphi : \mathcal{D} \longrightarrow \mathcal{D}' $ such that $ \varphi (\mathcal{D}_g) = \mathcal{D}'_g $ for all $ g \in G $.
\end{definition}

\begin{definition}\label{def-ref-coars}
Given gradings $ \Gamma : \mathcal{D} = \bigoplus_{ g \in G } \mathcal{D}_g $ and $ \Gamma' : \mathcal{D}' = \bigoplus_{ h \in H } \mathcal{D}'_h $, we say that $\Gamma'$ is a \emph{coarsening} of $\Gamma$, or that $\Gamma$ is a \emph{refinement} of $\Gamma'$, if for any $ g \in G $ there exists $ h \in H $ such that $ \mathcal{D}_g \subseteq \mathcal{D}'_h $.
If for some $ g \in G $ this inclusion is strict, then we will speak of a \emph{proper} refinement or coarsening.
A grading is said to be \emph{fine} if it does not admit a proper refinement.
\end{definition}

\begin{definition}\label{def-grad-div}
A graded algebra is said to be a \emph{graded division algebra} if it is unital and every nonzero homogeneous element has an inverse.
\end{definition}

If $\mathcal{D}$ is a $G$-graded division algebra, then $ I \in \mathcal{D}_e $, where $e$ is the neutral element of $G$ and $I$ the unity of $\mathcal{D}$.
Also, if $ 0 \neq X \in \mathcal{D}_g $, then $ X^{-1} \in \mathcal{D}_{g^{-1}} $.
Therefore, the support of $\mathcal{D}$ is a subgroup of $G$, since whenever $ \mathcal{D}_g \neq 0 $ and $ \mathcal{D}_h \neq 0 $, we also have $ 0 \neq \mathcal{D}_g \mathcal{D}_h \subseteq \mathcal{D}_{gh} $ and $ \mathcal{D}_{g^{-1}} \neq 0 $.
This also shows that, in the situation of Definition \ref{def-equiv}, if $\Gamma$ and $\Gamma'$ are division gradings, then $ \alpha : \mathrm{supp} (\Gamma) \longrightarrow \mathrm{supp} (\Gamma') $ is a homomorphism of groups.

\begin{remark}\label{rem-dim-hom-comp}
The neutral component $ \mathcal{D}_e $ of a graded division algebra $\mathcal{D}$ is a division algebra.
Also, if $ X_t \in \mathcal{D}_t $ is nonzero, then $ \mathcal{D}_t = \mathcal{D}_e X_t $; therefore all the (nonzero) homogeneous components of the grading have the same dimension.
In our case $\mathcal{D}$ will be finite-dimensional and the ground field will be the real field $\mathbb{R}$,
so this dimension will be 1, 2 or 4 depending on whether $ \mathcal{D}_e $ is isomorphic to $ \mathbb{R} $, $ \mathbb{C} $ or $ \mathbb{H} $.
\end{remark}

\section{Equivalence classes}\label{sec-equiv}

In this section we present a list of representatives of the equivalence classes of all division gradings (by abelian groups) on finite-dimensional simple real algebras, that is, on $M_n(\mathbb{R})$, $M_n(\mathbb{H})$ and $M_n(\mathbb{C})$.
Such gradings only exist if $n=2^m$ for some integer $ m \geq 0 $ (with the exception of $M_n(\mathbb{C})$ and its gradings as a complex algebra).
The proof that it is an exhaustive list with no redundancy of the equivalence classes, together with the classification up to isomorphism, will be given in Sections \ref{sec-fine} and \ref{sec-coar}.
We will need the following building blocks.

\begin{example}\label{exam-H-M2R-dim1}
We have the following $\mathbb{Z}_2^2$-division grading on $\mathbb{H}$:
\begin{equation}\label{eq-H-dim1}
\mathbb{H} = \mathbb{R} 1 \oplus \mathbb{R} i \oplus \mathbb{R} j \oplus \mathbb{R} k
\end{equation}
And on $ M_2 ( \mathbb{R} ) $ (also with the grading group $\mathbb{Z}_2^2$):
\begin{equation}\label{eq-M2R-dim1}
M_2(\mathbb{R}) =
	\mathbb{R} \begin{pmatrix} 1 & 0 \\ 0 & 1 \end{pmatrix}
	\oplus
	\mathbb{R} \begin{pmatrix} 0 & 1 \\ 1 & 0 \end{pmatrix}
	\oplus
	\mathbb{R} \begin{pmatrix} -1 & 0 \\ 0 & 1 \end{pmatrix}
	\oplus
	\mathbb{R} \begin{pmatrix} 0 & -1 \\ 1 & 0 \end{pmatrix}
\end{equation}
$\mathbb{C}$ has a natural $\mathbb{Z}_2$-division grading:
\begin{equation}\label{eq-C-dim1}
\mathbb{C} = \mathbb{R} 1 \oplus \mathbb{R} i
\end{equation}
Also, we can define the following division grading on $ M_2 ( \mathbb{C} ) $ by the group $ \mathbb{Z}_2 \times \mathbb{Z}_4 = \{ e,b,b^2,b^3,a,ab,ab^2,ab^3 \} $:
\begin{align}\label{eq-M2C-dim1}
M_2(\mathbb{C}) = &
	\mathbb{R} \begin{pmatrix} 1 & 0 \\ 0 & 1 \end{pmatrix}
	\oplus
	\mathbb{R} \begin{pmatrix} \omega & 0 \\ 0 & -\omega \end{pmatrix}
	\oplus
	\mathbb{R} \begin{pmatrix} i & 0 \\ 0 & i \end{pmatrix}
	\oplus
	\mathbb{R} \begin{pmatrix} \omega^3 & 0 \\ 0 & -\omega^3 \end{pmatrix}
	\oplus
\nonumber \\
&
	\mathbb{R} \begin{pmatrix} 0 & 1 \\ 1 & 0 \end{pmatrix}
	\oplus
	\mathbb{R} \begin{pmatrix} 0 & -\omega \\ \omega & 0 \end{pmatrix}
	\oplus
	\mathbb{R} \begin{pmatrix} 0 & i \\ i & 0 \end{pmatrix}
	\oplus
	\mathbb{R} \begin{pmatrix} 0 & -\omega^3 \\ \omega^3 & 0 \end{pmatrix}
\end{align}
Where $ \omega := e^{ i \pi / 4 } $ (note that $ \{ (1,1) , (\omega,-\omega) \} $ is a $\mathbb{C}$-basis of $\mathbb{C}^2$, therefore $ \{ (1,1) , \allowbreak (\omega,-\omega) , \allowbreak (i,i) , \allowbreak (\omega^3,-\omega^3) \} $ is an $\mathbb{R}$-basis of $\mathbb{C}^2$).
\end{example}

\begin{example}\label{exam-H-M2R-M2C-dim2}
We have the following division gradings on $\mathbb{H}$, $M_2(\mathbb{R})$ and $M_2(\mathbb{C})$ with supports $\mathbb{Z}_2$, $\mathbb{Z}_2$ and $\mathbb{Z}_4$, respectively (they are obtained as coarsenings of \eqref{eq-H-dim1}, \eqref{eq-M2R-dim1} and \eqref{eq-M2C-dim1}, respectively):
\begin{equation}\label{eq-H-dim2}
\mathbb{H} = [ \mathbb{R} 1 \oplus \mathbb{R} i ] \oplus [ \mathbb{R} j \oplus \mathbb{R} k ]
\end{equation}
\begin{equation}\label{eq-M2R-dim2}
M_2(\mathbb{R}) =
\left[
	\mathbb{R} \begin{pmatrix} 1 & 0 \\ 0 & 1 \end{pmatrix}
	\oplus
	\mathbb{R} \begin{pmatrix} 0 & -1 \\ 1 & 0 \end{pmatrix}
\right]
	\oplus
\left[
	\mathbb{R} \begin{pmatrix} 0 & 1 \\ 1 & 0 \end{pmatrix}
	\oplus
	\mathbb{R} \begin{pmatrix} -1 & 0 \\ 0 & 1 \end{pmatrix}
\right]
\end{equation}
\begin{align}\label{eq-M2C-dim2}
M_2(\mathbb{C}) = &
\left[
	\mathbb{R} \begin{pmatrix} 1 & 0 \\ 0 & 1 \end{pmatrix}
	\oplus
	\mathbb{R} \begin{pmatrix} 0 & i \\ i & 0 \end{pmatrix}
\right]
	\oplus
\left[
	\mathbb{R} \begin{pmatrix} \omega & 0 \\ 0 & -\omega \end{pmatrix}
	\oplus
	\mathbb{R} \begin{pmatrix} 0 & -\omega^3 \\ \omega^3 & 0 \end{pmatrix}
\right]
	\oplus
\nonumber \\
&
\left[
	\mathbb{R} \begin{pmatrix} i & 0 \\ 0 & i \end{pmatrix}
	\oplus
	\mathbb{R} \begin{pmatrix} 0 & 1 \\ 1 & 0 \end{pmatrix}
\right]
	\oplus
\left[
	\mathbb{R} \begin{pmatrix} \omega^3 & 0 \\ 0 & -\omega^3 \end{pmatrix}
	\oplus
	\mathbb{R} \begin{pmatrix} 0 & -\omega \\ \omega & 0 \end{pmatrix}
\right]
\end{align}
Where $ \omega := e^{ i \pi / 4 } $.
They are determined by the fact that there exists an $ X \in \mathcal{D}_{\bar{1}} $ such that $XJ=-JX$ and either $X^2=-I$, $X^2=+I$ or $X^4=-I$, respectively (where $I$ is the identity matrix and $ J \in \mathcal{D}_{\bar{0}} $ satisfies $J^2=-I$).
\end{example}

Now we give the promised list of representatives of the equivalence classes of division gradings.
In all cases, with the exception of (2).(f), $n=2^m$.
The gradings considered in the tensor products are the product gradings.

\begin{enumerate}
\item \textbf{Homogeneous components of dimension 1}.
Here all the copies of $M_2(\mathbb{R})$, $\mathbb{H}$, $\mathbb{C}$ and $M_2(\mathbb{C})$ are graded as in Example \ref{exam-H-M2R-dim1}.
\begin{enumerate}
	\item $ M_n(\mathbb{R}) \cong M_2(\mathbb{R}) \otimes \ldots \otimes M_2(\mathbb{R}) $.
	The grading group is $ (\mathbb{Z}_2^2)^m \cong \mathbb{Z}_2^{2m} $.
	If $n=1$ (that is, $m=0$) $ M_n(\mathbb{R}) = \mathbb{R} $ with the trivial grading.
	\item $ M_n(\mathbb{H}) \cong M_2(\mathbb{R}) \otimes \ldots \otimes M_2(\mathbb{R}) \otimes \mathbb{H} $.
	The grading group is $ (\mathbb{Z}_2^2)^{m+1} \cong \mathbb{Z}_2^{2m+2} $.
	\item $ M_n(\mathbb{C}) \cong M_2(\mathbb{R}) \otimes \ldots \otimes M_2(\mathbb{R}) \otimes \mathbb{C} $.
	The grading group is $ (\mathbb{Z}_2^2)^m \times \mathbb{Z}_2 \cong \mathbb{Z}_2^{2m+1} $.
	\item $ M_n(\mathbb{C}) \cong M_2(\mathbb{R}) \otimes \ldots \otimes M_2(\mathbb{R}) \otimes M_2(\mathbb{C}) $.
	The grading group is $ (\mathbb{Z}_2^2)^{m-1} \times ( \mathbb{Z}_2 \times \mathbb{Z}_4 ) \cong \mathbb{Z}_2^{2m-1} \times \mathbb{Z}_4 $, and $ n \geq 2 $.
\end{enumerate}
\item \textbf{Homogeneous components of dimension 2}.
Here the first factor in each tensor product is graded as in Example \ref{exam-H-M2R-M2C-dim2}; the other factors are graded as in Example \ref{exam-H-M2R-dim1}.
\begin{enumerate}
	\item $ M_n(\mathbb{R}) \cong M_2(\mathbb{R}) \otimes M_2(\mathbb{R}) \otimes \ldots \otimes M_2(\mathbb{R}) $.
	The grading group is $ \mathbb{Z}_2 \times (\mathbb{Z}_2^2)^{m-1} \cong \mathbb{Z}_2^{2m-1} $, and $ n \geq 2 $.
	\item $ M_n(\mathbb{H}) \cong \mathbb{H} \otimes M_2(\mathbb{R}) \otimes \ldots \otimes M_2(\mathbb{R}) $.
	The grading group is $ \mathbb{Z}_2 \times (\mathbb{Z}_2^2)^m \cong \mathbb{Z}_2^{2m+1} $.
	\item $ M_n(\mathbb{C}) \cong M_2(\mathbb{R}) \otimes M_2(\mathbb{R}) \otimes \ldots \otimes M_2(\mathbb{R}) \otimes \mathbb{C} $.
	The grading group is $ \mathbb{Z}_2 \times (\mathbb{Z}_2^2)^{m-1} \times \mathbb{Z}_2 \cong \mathbb{Z}_2^{2m} $, and $ n \geq 2 $.
	\item $ M_n(\mathbb{C}) \cong M_2(\mathbb{R}) \otimes M_2(\mathbb{R}) \otimes \ldots \otimes M_2(\mathbb{R}) \otimes M_2(\mathbb{C}) $.
	The grading group is $ \mathbb{Z}_2 \times (\mathbb{Z}_2^2)^{m-2} \times ( \mathbb{Z}_2 \times \mathbb{Z}_4 ) \cong \mathbb{Z}_2^{2m-2} \times \mathbb{Z}_4 $, and $ n \geq 4 $.
	\item $ M_n(\mathbb{C}) \cong M_2(\mathbb{C}) \otimes M_2(\mathbb{R}) \otimes \ldots \otimes M_2(\mathbb{R}) $.
	The grading group is $ \mathbb{Z}_4 \times (\mathbb{Z}_2^2)^{m-1} \cong \mathbb{Z}_2^{2m-2} \times \mathbb{Z}_4 $, and $ n \geq 2 $.
	\item Gradings on $M_n(\mathbb{C})$ as a complex algebra (see Remark \ref{rem-complex-grad}).
\end{enumerate}
\item \textbf{Homogeneous components of dimension 4}.
Here the first factor in each tensor product (which is always $\mathbb{H}$) is graded by the trivial group; the other factors are graded as in Example \ref{exam-H-M2R-dim1}.
\begin{enumerate}
	\item $ M_n(\mathbb{R}) \cong \mathbb{H} \otimes M_2(\mathbb{R}) \otimes \ldots \otimes M_2(\mathbb{R}) \otimes \mathbb{H} $.
	The grading group is $ (\mathbb{Z}_2^2)^{m-1} \cong \mathbb{Z}_2^{2m-2} $, and $ n \geq 4 $.
	\item $ M_n(\mathbb{H}) \cong \mathbb{H} \otimes M_2(\mathbb{R}) \otimes \ldots \otimes M_2(\mathbb{R}) $.
	The grading group is $ (\mathbb{Z}_2^2)^{m} \cong \mathbb{Z}_2^{2m} $.
	\item $ M_n(\mathbb{C}) \cong \mathbb{H} \otimes M_2(\mathbb{R}) \otimes \ldots \otimes M_2(\mathbb{R}) \otimes \mathbb{C} $.
	The grading group is $ (\mathbb{Z}_2^2)^{m-1} \times \mathbb{Z}_2 \cong \mathbb{Z}_2^{2m-1} $, and $ n \geq 2 $.
	\item $ M_n(\mathbb{C}) \cong \mathbb{H} \otimes M_2(\mathbb{R}) \otimes \ldots \otimes M_2(\mathbb{R}) \otimes M_2(\mathbb{C}) $.
	The grading group is $ (\mathbb{Z}_2^2)^{m-2} \times ( \mathbb{Z}_2 \times \mathbb{Z}_4 ) \cong \mathbb{Z}_2^{2m-3} \times \mathbb{Z}_4 $, and $ n \geq 4 $.
\end{enumerate}
\end{enumerate}

Since the dimension of the homogeneous components and the universal grading group are invariants of the equivalence classes of gradings, it is clear that the gradings in the above list are pairwise nonequivalent, with the possible exceptions of the pair (2).(c) and (2).(f), and the pair (2).(d) and (2).(e), which are studied in Section \ref{sec-coar}.

\section{Quadratic forms}\label{sec-quad-form}

We will classify the isomorphism classes of division gradings in terms of the maps that we present in this section.
If the homogeneous components have dimension $1$ or $4$, the isomorphism classes are in correspondence with quadratic forms over the field of two elements.

Let $T$ be an abelian group, an \emph{($\mathbb{R}$-valued) alternating bicharacter} is a map $ \beta : T \times T \longrightarrow \mathbb{R}^{\times} $ which is multiplicative in each variable and satisfies $ \beta (t,t) = 1 $ for all $ t \in T $.

Given an elementary abelian $2$-group $ T \cong \mathbb{Z}_2 \times \ldots \times \mathbb{Z}_2 $, we may think of it as a vector space over the field $ \mathbb{F}_2 $ ($ = \mathbb{Z}_2 $).
Recall that a \emph{quadratic form} on $T$ is a map $ \mu : T \longrightarrow \mathbb{F}_2 $ such that $ \mu ( \alpha t ) = \alpha^2 \mu (t) $ for all $ \alpha \in \mathbb{F}_2 $, $ t \in T $ and that the map $ \beta : T \times T \longrightarrow \mathbb{F}_2 $ defined by $ \beta (u,v) := \mu(u+v) - \mu(u) - \mu(v) $ is bilinear.
Note that the first condition is equivalent to $\mu(0)=0$; and that this implies that $ \beta $ is alternating, that is, $ \beta(t,t)=0 $ for all $ t \in T $.
The quadratic form $ \mu $ is \emph{regular} if one of the following conditions holds: either $ \mathrm{rad} (\beta) = \{ t \in T \mid \beta(u,t)=0 , \, \forall u \in T \} $ is zero, or $ \dim ( \mathrm{rad} (\beta) ) = 1 $ and $ \mu ( \mathrm{rad} (\beta) ) \neq 0 $.
It will be convenient for us to say that $ \mu $ has \emph{type I} in the first case and \emph{type II} in the second, and to call the generator of $ \mathrm{rad} (\beta) $ (for type II) the \emph{semineutral element} of the group.

Actually, we will use multiplicative versions, so in what follows the relation between a quadratic form $ \mu : T \longrightarrow \{ \pm 1 \} $ and its associated alternating bicharacter $ \beta : T \times T \longrightarrow \{ \pm 1 \} $ will be given by:
\[ \beta(u,v) = \mu(uv) \mu(u)^{-1} \mu(v)^{-1} \]
Note that $\mu(e)=+1$ is equivalent to $\beta$ being alternating.
Remark that $\mu$ has type I if $ \mathrm{rad} (\beta) = \{ t \in T \mid \beta(u,t)=+1 , \, \forall u \in T \} = \{ e \} $; while it has type II if there exists a semineutral element $ f \in T $ such that $ \mu (f) =-1 $ and $ \mathrm{rad} (\beta) = \{ e,f \} $.

Given an alternating bicharacter $ \beta : T \times T \longrightarrow \mathbb{R}^{\times} $, where the abelian group $T$ is not necessarily an elementary abelian $2$-group, we will also say that it has \emph{type I} or \emph{II} (and \emph{semineutral element}) if its radical satisfies the previous conditions ($ \mathrm{rad} (\beta) = \{ e \} $ for type I, and $ \mathrm{rad} (\beta) = \{ e,f \} $ for type II).
Note that if $T$ is finite, $\beta(u,v)$ is a root of unity for any $ u,v \in T $, so $\beta(u,v)$ is either $+1$ or $-1$.

\begin{proposition}\label{prop-basis-alt-bich}
Let $\beta$ be an alternating bicharacter on a finite abelian group $T$.
We say that a family $ \{ a_1 , b_1 , \ldots , a_m , b_m \} \subseteq T $ is symplectic if $ \beta (a_i,b_i) = \beta (b_i,a_i) = -1 $ ($ i = 1 , \ldots , m $) and all the other $\beta$-products are $+1$.
\begin{enumerate}
	\item If $\beta$ has type I, then $ T \cong (\mathbb{Z}_2^2)^m $ and there exists a symplectic basis $ \{ a_1 , \allowbreak b_1 , \ldots , a_m , \allowbreak b_m \} $ of $T$ (as a $\mathbb{F}_2$-vector space).
	\item If $\beta$ has type II with semineutral element $f$, then either $ T \cong (\mathbb{Z}_2^2)^m \times \mathbb{Z}_2 $ or $ (\mathbb{Z}_2^2)^{m-1} \times ( \mathbb{Z}_2 \times \mathbb{Z}_4 ) $, and there is a symplectic family $ \{ a_1 , \allowbreak b_1 , \ldots , a_m , \allowbreak b_m \} $ such that:
	\begin{enumerate}
		\item In the first case, $ \{ a_1 , b_1 , \ldots , a_m , b_m , f \} $ is a basis of $T$ (as a $\mathbb{F}_2$-vector space).
		\item In the second case, $ b_m^2 = f $ and $T$ is the inner direct product of the subgroups $ \langle a_1 \rangle , \langle b_1 \rangle , \ldots , \langle a_m \rangle , \langle b_m \rangle$ ($a_i^2=e$ for all $ i = 1 , \ldots , m $, and $b_i^2=e$ for all $ i = 1 , \ldots , m-1 $).
	\end{enumerate}
\end{enumerate}
\end{proposition}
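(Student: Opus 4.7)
The plan is to deduce the group structure of $T$ from the squaring map $q\colon T\to T$, $t\mapsto t^{2}$, and then to construct the symplectic family by lifting from the quotient $\bar T:=T/\mathrm{rad}(\beta)$. The key structural fact is that $q$ lands in $\mathrm{rad}(\beta)$: since $\beta$ is alternating and $\{\pm 1\}$-valued, $\beta(t^{2},u)=\beta(t,u)^{2}=1$ for all $t,u\in T$. The same reasoning forces $T$ to be a $2$-group, since any element of odd order would satisfy $\beta(t,\cdot)\equiv 1$ and hence lie in $\mathrm{rad}(\beta)$, which has order at most $2$.

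For part (1), $\mathrm{rad}(\beta)=\{e\}$ forces $t^{2}=e$ for all $t\in T$, so $T$ is an elementary abelian $2$-group, i.e., an $\mathbb{F}_{2}$-vector space. A symplectic basis is then produced by the standard symplectic Gram--Schmidt procedure: pick any $a_{1}\neq e$; use non-degeneracy to find $b_{1}$ with $\beta(a_{1},b_{1})=-1$; observe that $\langle a_{1},b_{1}\rangle$ is a non-degenerate symplectic plane, so $T=\langle a_{1},b_{1}\rangle\oplus\langle a_{1},b_{1}\rangle^{\perp}$ with $\beta$ still non-degenerate on the complement; and induct on $m$.

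For part (2), the induced $\bar\beta$ on $\bar T$ is a non-degenerate alternating bicharacter, so part (1) gives $\bar T\cong(\mathbb{Z}_{2}^{2})^{m}$ together with a symplectic basis $\{\bar a_{i},\bar b_{i}\}$. Define $\bar q\colon\bar T\to\mathbb{F}_{2}$ by $\bar q(\bar t)=0$ if $t^{2}=e$ and $\bar q(\bar t)=1$ if $t^{2}=f$; this is well-defined (elements of $\mathrm{rad}(\beta)$ square to $e$) and $\mathbb{F}_{2}$-linear. If $\bar q\equiv 0$, every element of $T$ squares to $e$, so $T$ is elementary abelian of $\mathbb{F}_{2}$-dimension $2m+1$, and arbitrary lifts $a_{i},b_{i}$ together with $f$ form a basis by a dimension count, yielding case (a). If $\bar q\not\equiv 0$, the image of $q$ is $\{e,f\}$, and the classification of finite abelian $2$-groups with $t^{2}\in\{e,f\}$ for all $t$ forces $T\cong\mathbb{Z}_{2}^{2m-1}\times\mathbb{Z}_{4}$. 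Starting from arbitrary lifts of a symplectic basis of $\bar T$, apply two symplectic-preserving adjustments repeatedly: swap $a_{i}\leftrightarrow b_{i}$ whenever $a_{i}^{2}=f$, and whenever $b_{i}^{2}=b_{j}^{2}=f$ with $i<j$ replace $b_{i}$ by $b_{i}b_{j}$ (which squares to $e$) and simultaneously $a_{j}$ by $a_{j}a_{i}$ (the latter is needed to restore $\beta(a_{j}a_{i},b_{i}b_{j})=1$, as a routine product expansion confirms). After reordering, only $b_{m}$ has $b_{m}^{2}=f$, and a size count via the projection $T\to\bar T$ shows that $\langle a_{1},b_{1},\ldots,a_{m-1},b_{m-1}\rangle$ is elementary abelian of order $2^{2m-2}$, meets $\langle a_{m},b_{m}\rangle\cong\mathbb{Z}_{2}\times\mathbb{Z}_{4}$ trivially, and together with it generates $T$, giving the inner direct product of case (b).

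The main obstacle is the bookkeeping in subcase (b): the two-at-a-time substitutions must be checked against every $\beta$-product simultaneously, not just the one being fixed, and one must verify that repeated application strictly reduces the number of generators squaring to $f$ so that the procedure really terminates with a single such generator, concentrated at the index $m$.
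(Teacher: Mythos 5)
Your strategy for part~2 --- determine the isomorphism type of $T$ via the squaring map, lift a symplectic basis from $\bar{T} := T/\langle f\rangle$, and then massage the lifts --- is a legitimate alternative to the paper's argument. The paper also passes to the quotient to pin down $T$, but for constructing the symplectic family in case~(b) it runs a direct induction on $m$ inside $T$: it shows that the set $S$ of order-$2$ elements (other than $f$) that are $\beta$-orthogonal to every order-$2$ element has at most two members, so for $m>1$ one can always find an order-$2$ hyperbolic pair $(a_1,b_1)$, split off $\langle a_1,b_1\rangle \times \langle a_1,b_1\rangle^{\perp}$, and recurse. Parts~1 and~2(a) of your proposal coincide with the paper's route up to the citation it uses for the symplectic basis.

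The adjustment procedure in~2(b) as you state it, however, is incomplete. Your two moves are: (i) swap $a_i \leftrightarrow b_i$ when $a_i^2=f$, and (ii) when $b_i^2=b_j^2=f$ with $i<j$, replace $(b_i,a_j)$ by $(b_ib_j,\,a_ja_i)$. Consider lifts with $a_1^2=b_1^2=f$ and $a_i^2=b_i^2=e$ for all $i\geq 2$; this configuration does occur (it is the one in which your linear form $\bar{q}$ is dual, under $\bar{\beta}$, to $\bar{a}_1\bar{b}_1$). Move~(i) merely exchanges $a_1$ and $b_1$ and leaves the state unchanged; move~(ii) requires two indices $j$ with $b_j^2=f$, and there is only one. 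The procedure stalls. The repair is a third symplectic-preserving move: whenever $a_i^2=b_i^2=f$, replace $a_i$ by $a_ib_i$, which squares to $a_i^2 b_i^2 = f^2 = e$, still satisfies $\beta(a_ib_i,b_i)=\beta(a_i,b_i)\beta(b_i,b_i)=-1$, and leaves all other $\beta$-products unchanged. Applying this move first, then move~(i), then move~(ii) repeatedly, and finally reindexing does terminate with exactly one generator --- which may be taken to be $b_m$ --- squaring to $f$; and at least one generator must square to $f$, since otherwise $T$ would be generated by order-$2$ elements and hence be elementary abelian. With that addition, your dimension count establishing the inner direct product decomposition is correct.
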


\begin{proof}
Part 1 is known, see for example \cite[Equation (2.6) in page 36]{EK}.
For part 2 we consider the alternating bicharacter $ \tilde{\beta} $ on $ T / \langle f \rangle $ such that $ \beta = \tilde{\beta} \circ ( \pi \times \pi ) $, where $ \pi : T \longrightarrow T / \langle f \rangle $ is the natural projection.
$ \tilde{\beta} $ satisfies part 1, so $ T / \langle f \rangle \cong \mathbb{Z}_2^{2m} $, and necessarily $ T \cong \mathbb{Z}_2^{2m+1} $ or $ \mathbb{Z}_2^{2m-1} \times \mathbb{Z}_4 $.
Now for (a) we can take any $ a_1 , b_1 , \ldots , a_m , b_m $ such that $ \{ a_1 \langle f \rangle , b_1 \langle f \rangle , \ldots , a_m \langle f \rangle , b_m \langle f \rangle \} $ is a symplectic basis of $ T / \langle f \rangle $.

So assume that $ T \cong \mathbb{Z}_2^{2m-1} \times \mathbb{Z}_4 $.
We define $S$ as the set of those $ u \in T \setminus \{f\} $ of order 2 such that for all $ v \in T $ of order 2 we have $\beta(u,v)=+1$.
If $ u \in S $, then $\beta(u,w)=-1$ for all $ w \in T $ of order 4, because $ u \not \in \mathrm{rad}(\beta) $ and the difference of two elements of $T$ of order 4 is an element of order 2.
So $S$ has at most two elements.

Now we prove (b) by induction on $m$.
If $m=1$ the statement is clear.
While if $m>1$, as $ \vert S \vert \leq 2 $, there exist $ a_1,b_1 \in T $ of order 2 such that $\beta(a_1,b_1)=-1$.
We can decompose $T$ as $ H \times H^{\perp} $, where $ H := \langle a_1 , b_1 \rangle $ and $ H^{\perp} := \{ t \in T \mid \beta(a_1,t) = \beta(b_1,t) = +1 \} $, and apply the induction hypothesis.
\end{proof}

\begin{remark}\label{rem-basis-alt-bich}
Let $ T \cong \mathbb{Z}_2^{2m-1} \times \mathbb{Z}_4 $, as in part 2.(b) of the previous proposition; and denote $ T^2 = \{ t^2 \mid t \in T \} $ and $ T_2 := \{ t \in T \mid t^2=e \} $.
Since the subgroup $T^2$ has only two elements, its generator $f$ is a distinguished element of the group.
If we consider an alternating bicharacter $\beta$ on $T$ of type II, the semineutral element will be $f$.

The presence of $\beta$ also defines two more distinguished elements, the members of $ \mathrm{rad} ( \beta \vert_{ T_2 \times T_2 } ) \setminus \mathrm{rad}(\beta) $.
Indeed, with the notation of the previous proposition, they are $a_m$ and $ a_m f $.
Moreover, if the restriction to $ T_2 \times T_2 $ of $\beta$ is associated to a quadratic form $\mu$ such that $\mu(f)=-1$, then $\mu$ takes the value $+1$ on one of them and $-1$ on the other; so they are determined completely by the pair $(\beta,\mu)$.
\end{remark}

\begin{remark}[Arf invariant]\label{rem-arf-inv}
Let $\mu$ be a quadratic form of type I and consider a symplectic basis $ \{ a_1 , b_1 , \ldots , a_m , b_m \} $ of its associated alternating bicharacter as in Proposition \ref{prop-basis-alt-bich}.
Denote by $ m : \{ \pm 1 \} \times \{ \pm 1 \} \longrightarrow \{ \pm 1 \} $ the multiplicative version of the product in $\mathbb{Z}_2$, that is, $ m(+1,+1) = m(+1,-1) = m(-1,+1) = +1 $ and $ m(-1,-1) = -1 $.
The \emph{Arf invariant} of $\mu$ is:
\[ \mathrm{Arf}(\mu) := m(\mu(a_1),\mu(b_1)) \cdots m(\mu(a_m),\mu(b_m)) \]
It is well known (see for example \cite[Corollary 9.5]{Saveliev}) that $\mathrm{Arf}(\mu)$ is the value which is assumed most often by $\mu$; therefore, the definition does not depend on the choice of the symplectic basis.
Actually, the proof of Theorem \ref{thm-MnR-dim1} below provides an alternative proof of the fact that the Arf invariant is well defined.
\end{remark}

If the dimension of the homogeneous components is $2$, the situation is more complicated.
We will now define the objects that are in correspondence with the isomorphism classes of gradings.

\begin{lemma}\label{lem-nice}
Let $T$ be a finite abelian group, $K$ an index $2$ subgroup of $T$, and $ \nu : T \setminus K \longrightarrow \{ \pm 1 \} $ a map.
Suppose that $K$ is an elementary abelian $2$-group (that is, $ K \cong \mathbb{Z}_2 \times \ldots \times \mathbb{Z}_2 $).
Consider the family of maps $ \mu_g : K \longrightarrow \{ \pm 1 \} $ defined by $ \mu_g(k) := \nu(gk) \nu(g)^{-1} $, as $g$ runs through $ T \setminus K $.
Then, if a member of this family is a quadratic form, so are the others, and all have the same associated alternating bicharacter.
Moreover, all the quadratic forms are regular if one of them is.
Besides, if the alternating bicharacter has type I, then the product $ \nu(g) \cdot \mathrm{Arf} (\mu_g) $ does not depend on the choice of $ g \in T \setminus K $.
\end{lemma}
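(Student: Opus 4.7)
The plan is to compare $\mu_g$ and $\mu_{g'}$ for arbitrary $g, g' \in T \setminus K$. Since $K$ has index $2$ in $T$, there is a unique $k_0 \in K$ with $g' = g k_0$, and the definitions give immediately
\[
\nu(g') \;=\; \nu(g)\,\mu_g(k_0),
\qquad
\mu_{g'}(k) \;=\; \mu_g(k_0 k)\,\mu_g(k_0)^{-1}.
\]

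Now assume $\mu_g$ is a quadratic form with associated alternating bicharacter $\beta$. The identity $\mu_g(k_0 k) = \mu_g(k_0)\mu_g(k)\beta(k_0,k)$ together with $\mu_g(k_0)^2 = +1$ simplifies the second equation to
\[ \mu_{g'}(k) \;=\; \mu_g(k)\,\beta(k_0, k). \]
Since $\beta(k_0, \cdot)$ is a character $K \to \{\pm 1\}$, a direct check shows that $\mu_{g'}$ is itself a quadratic form whose associated bicharacter is again $\beta$; this gives the first two assertions. For regularity, recall that the type is determined by $\beta$ and, in type II, by the value $\mu(f) = -1$ on the generator $f$ of $\mathrm{rad}(\beta)$; since $\beta$ is preserved and $\beta(k_0, f) = +1$ forces $\mu_{g'}(f) = \mu_g(f)$, the type and the regularity transfer uniformly across the family.

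For the last assertion, assume $\beta$ has type I, fix a symplectic basis $\{a_1, b_1, \ldots, a_m, b_m\}$ of $K$ (as in Proposition \ref{prop-basis-alt-bich}), and write $k_0 = a_1^{c_1} b_1^{d_1} \cdots a_m^{c_m} b_m^{d_m}$ with $c_i, d_i \in \{0, 1\}$. The plan is to establish the key identity
\[ \mathrm{Arf}(\mu_{g'}) \;=\; \mathrm{Arf}(\mu_g)\cdot \mu_g(k_0), \]
which combined with $\nu(g') = \nu(g)\mu_g(k_0)$ and $\mu_g(k_0)^2 = +1$ yields $\nu(g')\mathrm{Arf}(\mu_{g'}) = \nu(g)\mathrm{Arf}(\mu_g)$ at once.

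This identity is the main obstacle; it is a multiplicative avatar of the standard change-of-quadratic-form formula $\mathrm{Arf}(\mu + \beta(w,\cdot)) = \mathrm{Arf}(\mu) + \mu(w)$. To verify it, one notes that $\beta(k_0, a_i) = (-1)^{d_i}$ and $\beta(k_0, b_i) = (-1)^{c_i}$ by the symplectic relations, so $\mu_{g'}(a_i) = \mu_g(a_i)(-1)^{d_i}$ and $\mu_{g'}(b_i) = \mu_g(b_i)(-1)^{c_i}$; meanwhile, iterating the quadratic identity gives $\mu_g(k_0) = \prod_i \mu_g(a_i)^{c_i}\mu_g(b_i)^{d_i}(-1)^{c_i d_i}$. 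Using the bi-multiplicativity of $m$ on $\{\pm 1\}$, the product $\prod_i m(\mu_{g'}(a_i), \mu_{g'}(b_i))$ expands, factor by factor, to $\mathrm{Arf}(\mu_g)\cdot \mu_g(k_0)$, as desired.
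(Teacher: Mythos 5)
Your proof is correct, and the first two-thirds of it (that all the $\mu_g$ are quadratic forms with the same bicharacter, and that regularity transfers) is essentially identical to the paper's argument, via the identity $\mu_{g'}(k)=\mu_g(k)\beta(g^{-1}g',k)$.

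Where you genuinely diverge is the Arf-invariant claim. The paper's proof is basis-free and very short: it invokes the characterization of $\mathrm{Arf}(\mu_g)$ as the value of $\{\pm1\}$ assumed most often by $\mu_g$, observes $\nu(gk)=\nu(g)\mu_g(k)$, and concludes that $\nu(g)\cdot\mathrm{Arf}(\mu_g)$ is the value assumed most often by $\nu$, manifestly independent of $g$. You instead pick a symplectic basis, write $k_0=g^{-1}g'$ in coordinates, and verify by direct expansion the change-of-form formula $\mathrm{Arf}(\mu_{g'})=\mathrm{Arf}(\mu_g)\cdot\mu_g(k_0)$, which then finishes the job in one line. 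Both routes are valid. The paper's is shorter and conceptually cleaner (it reduces everything to the ``democratic'' description of the Arf invariant, which the paper already cites); yours is more computational but yields a slightly more refined piece of information (the exact multiplicative shift of the Arf invariant under twisting by a character $\beta(k_0,\cdot)$), and you correctly observe that this is just the multiplicative form of the standard identity $\mathrm{Arf}(\mu+\beta(w,\cdot))=\mathrm{Arf}(\mu)+\mu(w)$. One small caveat on self-containedness: both your argument and the paper's presuppose that the Arf invariant is well-defined (independent of the choice of symplectic basis); the paper is explicit that it is relying on this external fact, and so are you implicitly when you fix a basis, so this is not a gap, just worth being aware of.
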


As it happens with the Arf invariant, this lemma follows from the proof of Theorems \ref{thm-MnR-dim2} and \ref{thm-MnC-dim2}.
Anyway, we give a direct proof.

\begin{proof}
Let $ g,h \in T \setminus K $, and assume that $ \mu_g $ is a quadratic form.
Call $ \beta $ its associated alternating bicharacter.
The first assertions follow from the following formula ($ k \in K $):
\[ \mu_h (k) = \frac{\nu(hk)}{\nu(h)} = \frac{\mu_g(g^{-1}hk)}{\mu_g(g^{-1}h)} = \mu_g (k) \beta (g^{-1}h,k) \]
Indeed, as $\beta$ is bimultiplicative, $ \mu_h $ is also a quadratic form with the same $\beta$.
While if $ \mu_g $ has type II with semineutral element $f$, then $ \mu_g (f) = -1 $, so also $ \mu_h (f) = \mu_g (f) \beta (g^{-1}h,f) = -1 $.

Finally suppose that the quadratic forms have type I.
$ \mathrm{Arf} (\mu_g) $ is the value which is assumed most often by $ \mu_g $.
And $ \nu(gk) = \nu(g) \mu_g(k) $ for all $ k \in K $.
Therefore $ \nu(g) \cdot \mathrm{Arf} (\mu_g) $ takes the value which is assumed most often by $\nu$, which is independent of $ g \in T \setminus K $.
\end{proof}

\begin{definition}\label{def-nice-map}
With the notation of the previous lemma, we will say that $\nu$ is a \emph{nice map} if the $\mu_g$ are quadratic forms; that it is \emph{regular} or that it is of \emph{type I} or \emph{II} if so are the $\mu_g$; and, in the case of type I, that it is \emph{positive} or \emph{negative} if the value of $ \nu(g) \cdot \mathrm{Arf} (\mu_g) $ is $+1$ or $-1$, respectively.
Also, we will speak of the \emph{semineutral element} of $\nu$ (when $\nu$ has type II) referring to that of the quadratic forms $\mu_g$.
\end{definition}

\section{Fine gradings}\label{sec-fine}

In this section we classify the division gradings whose homogeneous components have dimension $1$ (see Remark \ref{rem-dim-hom-comp}).
As mentioned in the Introduction, the situation is very similar to that of the complex field, but the isomorphism classes are in correspondence with quadratic forms over the field of two elements, instead of alternating bicharacters, because of the lack of roots of unity in the real numbers.
Later on, we will see in Proposition \ref{prop-ref} that these are precisely the fine gradings.

The center of the algebra will play a key role in the following arguments.
It will force us to differentiate between the case of $M_n(\mathbb{R})$ and $M_n(\mathbb{H})$ and the case of $M_n(\mathbb{C})$.

\begin{lemma}\label{lem-iI}
Consider a grading on a real algebra $\mathcal{D}$ isomorphic to $M_n (\mathbb{C})$ by an abelian group $G$.
Denote by $I$ the unity of $\mathcal{D}$.
Then the element $iI$ is homogeneous of degree either the neutral element $e$ or an element $f$ of order 2.
\end{lemma}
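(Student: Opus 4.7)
The plan is to use that, since $G$ is abelian, the center $Z := Z(\mathcal{D})$ (which is isomorphic to $\mathbb{C}$ as a real algebra) is automatically a graded subalgebra of $\mathcal{D}$, and then to exploit the fact that $Z$ is only $2$-dimensional over $\mathbb{R}$.

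First I would verify that $Z$ is graded: for $z = \sum_g z_g \in Z$ and any $y \in \mathcal{D}_h$, comparing the degree-$(gh)=(hg)$ components on both sides of $yz = zy$ gives $y z_g = z_g y$ for every $g$, so each $z_g$ lies in $Z$. Hence $Z = \bigoplus_g Z_g$ with $Z_g := \mathcal{D}_g \cap Z$. Since $\dim_{\mathbb{R}} Z = 2$ and $\mathbb{R}I \subseteq Z_e$, there are only two possibilities: either $Z_e = Z$, which immediately gives $iI \in \mathcal{D}_e$; or $Z_e = \mathbb{R}I$ and there is a unique $g \neq e$ with $Z_g \neq 0$, in which case $\dim Z_g = 1$.

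The main step is the second case. Writing $iI = rI + z$ with $r \in \mathbb{R}$ and $0 \neq z = (i - r)I \in Z_g$, I would square $z$ and substitute $iI = rI + z$ to obtain
\[ z^2 = (r^2 - 1)I - 2r(iI) = -(1 + r^2)I - 2rz. \]
This element lies in $\mathcal{D}_{g^2}$, while its two summands lie in $\mathcal{D}_e$ and $\mathcal{D}_g$ respectively. If $g^2 \neq e$, the $\mathcal{D}_e$-summand would have to vanish, contradicting $1 + r^2 > 0$; hence $g^2 = e$, and then matching the $\mathcal{D}_g$-component forces $rz = 0$, so $r = 0$ and $iI = z \in \mathcal{D}_g$ with $g$ of order $2$.

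I do not expect any substantial obstacle; the entire argument rests on the automatic gradedness of the center (which uses nothing more than abelianness of $G$) and the smallness of $\mathbb{C}$ as a real algebra, together with the single degree-counting computation above.
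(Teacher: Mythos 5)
Your proof is correct and follows essentially the same route as the paper's: observe that abelianness of $G$ forces the center $Z(\mathcal{D}) \cong \mathbb{C}$ to be graded, note that its $2$-dimensionality limits the grading to at most one nontrivial component, and then square $iI$ and match homogeneous components. The only minor difference is that you extract both $g^2=e$ and $r=0$ from the single identity $z^2 = -(1+r^2)I - 2rz$, whereas the paper asserts separately that $f$ has order $2$ (implicitly because the support of the graded division algebra $Z(\mathcal{D})$ is a subgroup) before matching components; your version is, if anything, marginally more self-contained.
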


\begin{proof}
As $G$ is abelian, the center $Z(\mathcal{D})$ is a graded subalgebra, because $ x = \sum_{u \in G} x_u \in Z(\mathcal{D}) $ if and only if $ x y_v = y_v x $ for all $ v \in G$, $ y_v \in \mathcal{D}_v $, if and only if $ x_u y_v = y_v x_u $ for all $ u \in G$, $ v \in G$, $ y_v \in \mathcal{D}_v $, if and only if $ x_u \in Z(\mathcal{D}) $ for all $ u \in G$.
If $Z(\mathcal{D})$ is trivially graded we are done, so assume that $ iI = a + b $ with $ a \in Z(\mathcal{D})_e = \mathbb{R} I $ and $ b \in Z(\mathcal{D})_f $ ($ e \neq f $).
Necessarily $f$ has order 2.
Therefore considering the homogeneous components in $ -I = (iI)^2 = a^2 + b^2 + 2ab $ we get that $2ab=0$, so $a=0$.
\end{proof}

The element $f$ of the previous lemma will be the semineutral element of an alternating bicharacter.

\begin{theorem}\label{thm-MnR-dim1}
Let $G$ be an abelian group, and $\mathcal{D}$ a real algebra isomorphic to $ M_n (\mathbb{R}) $ (respectively $ M_n (\mathbb{H}) $).
If there is a division $G$-grading $\Gamma$ on $\mathcal{D}$ with homogeneous components of dimension $1$, then $ n = 2^m $ for some integer $ m \geq 0 $ and the support of $\Gamma$ is a subgroup $T$ of $G$ isomorphic to $\mathbb{Z}_2^{2m}$ (respectively $\mathbb{Z}_2^{2m+2}$).
The isomorphism classes of such gradings are in one-to-one correspondence with the pairs $(T,\mu)$ formed by a subgroup $T$ of $G$ isomorphic to $\mathbb{Z}_2^{2m}$ (respectively $\mathbb{Z}_2^{2m+2}$), and a quadratic form $\mu$ on $T$ of type I and Arf invariant $+1$ (respectively $-1$).
All such gradings belong to one equivalence class, represented by 1.(a) (respectively 1.(b)) in the list of Section \ref{sec-equiv}.
\end{theorem}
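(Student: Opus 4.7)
My plan is to work entirely on the support $T$, extracting a $\{\pm 1\}$-valued alternating bicharacter $\beta$ and a quadratic form $\mu$ directly from the algebra structure, and then decomposing $\mathcal{D}$ as a tensor product of elementary building blocks indexed by a symplectic basis so that whether $\mathcal{D}$ is $M_n(\mathbb{R})$ or $M_n(\mathbb{H})$ is controlled by the Arf invariant of $\mu$.

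First I would fix a nonzero homogeneous element $X_t \in \mathcal{D}_t$ for each $t \in T$; since each $\mathcal{D}_t$ is one-dimensional, any other nonzero element of $\mathcal{D}_t$ is $\lambda X_t$ with $\lambda \in \mathbb{R}^\times$. The relations $X_s X_t = \sigma(s,t) X_{st}$ and $X_s X_t = \beta(s,t) X_t X_s$ define a $2$-cocycle $\sigma$ and a commutation factor $\beta$. Raising $X_t$ to the order of $t$ lands in $\mathcal{D}_e = \mathbb{R} I$, and centrality of $X_t^{|t|}$ forces $\beta(s,t)^{|t|} = 1$; combined with the abelian grading group this shows $\beta \colon T \times T \to \{\pm 1\}$ is an alternating bicharacter. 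Any $t \in \mathrm{rad}(\beta)$ would give a central homogeneous component, but the centre of both $M_n(\mathbb{R})$ and $M_n(\mathbb{H})$ is $\mathbb{R} I$, so $\mathrm{rad}(\beta) = \{e\}$ and $\beta$ has type I. Proposition~\ref{prop-basis-alt-bich} then yields $T \cong \mathbb{Z}_2^{2m}$ and a symplectic basis $\{a_1, b_1, \ldots, a_m, b_m\}$.

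Second, exploiting $t^2 = e$, we have $X_t^2 = c_t I \in \mathbb{R} I$ with $c_t \neq 0$, and $\mu(t) := \mathrm{sgn}(c_t) \in \{\pm 1\}$ is independent of the choice of $X_t$ since rescaling multiplies $c_t$ by a positive square. Comparing $(X_s X_t)^2 = \beta(s,t) X_s^2 X_t^2$ with $\sigma(s,t)^2 X_{st}^2$ shows $\mu(st)\mu(s)^{-1}\mu(t)^{-1} = \beta(s,t)$, so $\mu$ is a quadratic form of type I whose associated bicharacter is $\beta$. Third, I would use the Double Centralizer Theorem as the bridge to $n = 2^m$ and the Arf condition. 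The subalgebra $\mathcal{A}_i := \mathrm{span}(I, X_{a_i}, X_{b_i}, X_{a_i} X_{b_i})$ is, via its defining relations, isomorphic to the real quaternion algebra with parameters $(\mu(a_i), \mu(b_i))$; hence to $\mathbb{H}$ exactly when $\mu(a_i) = \mu(b_i) = -1$, and to $M_2(\mathbb{R})$ otherwise. The $\mathcal{A}_i$ commute pairwise (since $\beta$ vanishes across distinct pairs of the basis) and each is central simple over $\mathbb{R}$, so iterated application of the Double Centralizer Theorem yields $\mathcal{D} \cong \mathcal{A}_1 \otimes \cdots \otimes \mathcal{A}_m$ as $T$-graded algebras. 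Using $\mathbb{H} \otimes \mathbb{H} \cong M_4(\mathbb{R})$, $\mathbb{H} \otimes M_2(\mathbb{R}) \cong M_2(\mathbb{H})$ and $M_2(\mathbb{R}) \otimes M_2(\mathbb{R}) \cong M_4(\mathbb{R})$, this tensor product is $M_n(\mathbb{R})$ (resp.\ $M_n(\mathbb{H})$) iff the number of factors equal to $\mathbb{H}$ is even (resp.\ odd), which matches $\mathrm{Arf}(\mu) = \prod_i m(\mu(a_i),\mu(b_i))$ being $+1$ (resp.\ $-1$). This forces $n = 2^m$, produces the required Arf value, and simultaneously proves the well-definedness of $\mathrm{Arf}$ promised in Remark~\ref{rem-arf-inv}.

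Finally, for the classification I would check both directions: given any $(T, \mu)$ of the prescribed type, the tensor construction above yields a grading on the appropriate algebra; and any two gradings sharing the same $(T, \mu)$ are isomorphic because each $\mathcal{A}_i$ is determined up to graded isomorphism by $\mu(a_i)$ and $\mu(b_i)$, each such $4$-dimensional graded algebra appearing in Example~\ref{exam-H-M2R-dim1}. For equivalence, any two quadratic forms of type I on $\mathbb{Z}_2^{2m}$ with the same Arf invariant are related by an automorphism of $T$, inducing a grading equivalence onto the representative 1.(a) (resp.\ 1.(b)). The main obstacle is the third step: rigorously justifying the tensor factorisation via the Double Centralizer Theorem while tracking the grading through it, and translating the combinatorial Arf invariant into the algebraic dichotomy between $M_n(\mathbb{R})$ and $M_n(\mathbb{H})$.
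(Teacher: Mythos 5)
Your proposal is correct and follows the paper's strategy closely: extract the commutation bicharacter $\beta$ and the sign form $\mu$, show they are isomorphism invariants with $\beta$ of type I, appeal to Proposition~\ref{prop-basis-alt-bich} for the structure of $T$, and use the symplectic basis to split the problem into $\mathbb{Z}_2^2$-graded quaternion blocks. Two points differ in execution, though not in substance. First, you decompose $\mathcal{D}$ itself as $\mathcal{A}_1 \otimes \cdots \otimes \mathcal{A}_m$ using the Double Centralizer Theorem (which the paper reserves for the dimension-$4$ and dimension-$2$ cases), whereas the paper runs the argument in the opposite direction: it observes that the one-dimensionality of the components makes $(T,\mu)$ a complete invariant, then realizes each datum by an abstract tensor product $M_2(\mathbb{R})^{\otimes k}\otimes\mathbb{H}^{\otimes(m-k)}$. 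Your variant works but is slightly heavier machinery than needed; one can even skip the Double Centralizer Theorem by noting that the multiplication map $\mathcal{A}_1\otimes\cdots\otimes\mathcal{A}_m\to\mathcal{D}$ is a nonzero homomorphism from a central simple algebra of the right dimension. Second, for the equivalence statement you invoke the classical classification of regular quadratic forms over $\mathbb{F}_2$ by the Arf invariant; the paper instead proves equivalence hands-on via the explicit symplectic change of basis that rewrites $\mathbb{H}\otimes\mathbb{H}$ as $M_2(\mathbb{R})\otimes M_2(\mathbb{R})$, precisely so that the argument yields an independent proof of the well-definedness of the Arf invariant (Remark~\ref{rem-arf-inv}) rather than relying on it. If you want your proof to deliver that side consequence as well, replace the Witt-type appeal by that explicit basis computation; otherwise your reasoning is sound, modulo the off-by-one bookkeeping between $T\cong\mathbb{Z}_2^{2m}$ and $T\cong\mathbb{Z}_2^{2m+2}$ in the $M_n(\mathbb{H})$ case.
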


\begin{theorem}\label{thm-MnC-dim1}
Let $G$ be an abelian group, and $\mathcal{D}$ a real algebra isomorphic to $M_n(\mathbb{C})$.
If there is a division $G$-grading $\Gamma$ on $\mathcal{D}$ with homogeneous components of dimension $1$, then $ n = 2^m $ for some integer $ m \geq 0 $ and the support of $\Gamma$ is a subgroup $T$ of $G$ isomorphic either to $ \mathbb{Z}_2^{2m+1} $ or to $ \mathbb{Z}_2^{2m-1} \times \mathbb{Z}_4 $.
Moreover, the following possibilities appear:
\begin{enumerate}
	\item If the support is isomorphic to $\mathbb{Z}_2^{2m+1}$, the isomorphism classes of such gradings are in one-to-one correspondence with the pairs $(T,\mu)$ formed by a subgroup $T$ of $G$ isomorphic to $\mathbb{Z}_2^{2m+1}$, and a quadratic form $\mu$ on $T$ of type II.
	All such gradings belong to one equivalence class, represented by 1.(c) in the list of Section \ref{sec-equiv}.
	\item If the support is isomorphic to $ \mathbb{Z}_2^{2m-1} \times \mathbb{Z}_4 $, the isomorphism classes of such gradings are in one-to-one correspondence with the triples $(T,\beta,\mu)$ formed by a subgroup $T$ of $G$ isomorphic to $ \mathbb{Z}_2^{2m-1} \times \mathbb{Z}_4 $, an alternating bicharacter $\beta$ on $T$ of type II, and a quadratic form $\mu$ on $ T_2 := \{ t \in T \mid t^2=e \} $; such that the restriction of $\beta$ to $ T_2 \times T_2 $ is the associated alternating bicharacter of $\mu$, and that $\mu$ takes the value $-1$ on $f$, where $f$ is the generator of $ T^2 = \{ t^2 \mid t \in T \} $.
	All such gradings belong to one equivalence class, represented by 1.(d) in the list of Section \ref{sec-equiv}.
\end{enumerate}
\end{theorem}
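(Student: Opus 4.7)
The plan is to reproduce the strategy of Theorem \ref{thm-MnR-dim1}, but with Lemma \ref{lem-iI} supplying a distinguished homogeneous element $iI$ that forces the canonical bicharacter to be of type II. First I fix a nonzero $X_t \in \mathcal{D}_t$ for each $t \in T := \mathrm{supp}(\Gamma)$. Since each component is one-dimensional and $G$ is abelian, the relation $X_u X_v = \beta(u,v)\,X_v X_u$ defines an alternating bicharacter $\beta \colon T \times T \to \{\pm 1\}$, independent of the rescalings of the $X_t$.

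To identify the type of $\beta$, I use that $\mathcal{D}_e$ is a one-dimensional real division algebra, so $\mathcal{D}_e = \mathbb{R} I$; Lemma \ref{lem-iI} then places $iI$ in some $\mathcal{D}_f$ with $f$ of order $2$, $f \neq e$. The center $Z(\mathcal{D}) = \mathbb{R} I \oplus \mathbb{R}(iI)$ is graded, and $t \in \mathrm{rad}(\beta)$ if and only if $X_t$ is central; a dimension count gives $\mathrm{rad}(\beta) = \{e,f\}$, so $\beta$ is of type II with semineutral element $f$. Proposition \ref{prop-basis-alt-bich}(2) then yields $T \cong \mathbb{Z}_2^{2m+1}$ or $T \cong \mathbb{Z}_2^{2m-1} \times \mathbb{Z}_4$, and comparing $|T|$ with $\dim_\mathbb{R} \mathcal{D} = 2n^2$ forces $n = 2^m$. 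The quadratic form is defined on $T_2$ by rescaling each $X_t$ (for $t \in T_2$) by a positive real to arrange $X_t^2 = \mu(t)\,I$ with $\mu(t) \in \{\pm 1\}$; the identity $(X_u X_v)^2 = \beta(u,v)\,X_u^2 X_v^2$ shows that $\mu$ is a quadratic form on $T_2$ whose associated bicharacter is $\beta|_{T_2 \times T_2}$, and taking $X_f = iI$ gives $\mu(f) = -1$. In Case (1), $T = T_2$, so $\mu$ is defined on all of $T$ and is of type II; in Case (2), we retain the triple $(T, \beta, \mu)$ as invariant.

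For the classifications I would reverse the construction using symplectic bases from Proposition \ref{prop-basis-alt-bich}(2) to realise $\mathcal{D}$ as the iterated tensor-product model $1.(c)$ or $1.(d)$: the commutation and square relations among the $X_{a_i}, X_{b_i}$, together with $X_f = iI$ and (in Case 2) $X_{b_m}^2 \in \mathbb{R}(iI)$, determine a real subalgebra of the full dimension $2n^2$, which must therefore coincide with $\mathcal{D}$. Two gradings with the same invariants are then isomorphic via the algebra map sending one choice of standardised generators to the other, and the evident bijection of supports gives the equivalence statement for each case.

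I expect the main technical obstacle to be the reconstruction step for Case (2). There the grading-group isomorphism type $\mathbb{Z}_2^{2m-1} \times \mathbb{Z}_4$ allows $\mu$ on $T_2$ and $\beta$ on all of $T$ to carry independent information (beyond their forced compatibility on $T_2 \times T_2$), and both must be encoded simultaneously in the algebra while still yielding $M_n(\mathbb{C})$ rather than $M_n(\mathbb{R})$ or $M_n(\mathbb{H})$. Remark \ref{rem-basis-alt-bich} is the key structural input here, since it pins down the distinguished element $a_m$ on which the value of $\mu$ ultimately discriminates between the two isomorphism classes once $\beta$ has been fixed.
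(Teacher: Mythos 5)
Your proposal follows the same strategy as the paper's proof: defining the alternating bicharacter $\beta$ from the commutation relations, invoking Lemma \ref{lem-iI} to place $iI$ in a homogeneous component $\mathcal{D}_f$, deducing $\mathrm{rad}(\beta)=\{e,f\}$ from the dimension of the center, applying Proposition \ref{prop-basis-alt-bich}(2), defining the quadratic form $\mu$ on $T_2$ by $X_t^2 = \mu(t)I$, and reconstructing via tensor products of the building blocks from Example \ref{exam-H-M2R-dim1}. The only imprecision is in your closing remark: for Case (2), once $\beta$ is fixed, $\mu$ does not pick out a dichotomy of ``two isomorphism classes'' — rather, the whole range of admissible $\mu$'s (with $\mu(f)=-1$) parametrizes the isomorphism classes, and the role of the distinguished pair $\{a_m, a_mf\}$ from Remark \ref{rem-basis-alt-bich} is simply that $(\beta,\mu)$ singles out one of them on which $\mu$ takes the value $+1$, which lets you normalize $\mu(a_m)=+1$ and realize the last tensor factor as $M_2(\mathbb{C})$ graded by Equation \eqref{eq-M2C-dim1}. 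All gradings in Case (2) end up in the single equivalence class 1.(d).
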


\begin{proof}[Proof of Theorems \ref{thm-MnR-dim1} and \ref{thm-MnC-dim1}]
Let $T$ be the support of the grading.
For each $ t \in T $, we pick $ X_t \in \mathcal{D}_t $ so that $X_e$ is the unity $I$ and $ X_t^{ \vert t \vert } = \pm I $.
We also define $ \beta : T \times T \longrightarrow \mathbb{R}^{\times} $ by the equation:
\begin{equation}\label{eq-alt-bich}
X_u X_v = \beta (u,v) X_v X_u
\end{equation}
Notice that $\beta$ does not depend on the choice of the $X_t$ and that it is an invariant of the isomorphism class of the grading.
Now we check that $ \beta $ is an alternating bicharacter.
For that purpose, we define $ \sigma : T \times T \longrightarrow \mathbb{R}^{\times} $ such that $ X_u X_v = \sigma(u,v) X_{uv} $.
On the one hand:
\[ X_u X_v X_w = \sigma(u,v) X_{uv} X_w = \sigma(u,v) \beta(uv,w) X_w X_{uv} = \beta(uv,w) X_w X_u X_v \]
But on the other hand:
\[ X_u X_v X_w = \beta(v,w) X_u X_w X_v = \beta(u,w) \beta(v,w) X_w X_u X_v \]
Therefore $\beta$ is multiplicative in the first variable, and likewise in the second; so $ \beta $ is an alternating bicharacter.
Moreover, it has type I or II depending on the center of $\mathcal{D}$ (remember that, if $ \mathcal{D} \cong M_n (\mathbb{C}) $, then $iI$ is homogeneous).
Therefore Proposition \ref{prop-basis-alt-bich} gives us the condition on the group $T$ and, by dimension count, on $n$.

We define $ \mu : T_2 \longrightarrow \{ \pm 1 \} $ by the equation:
\begin{equation}\label{eq-quad-form}
X_t^2 = \mu (t) I
\end{equation}
Remark that $\mu$ is independent of the choice of the $X_t$, and that it is another invariant of the isomorphism class of the grading.
Note that $T_2$ is $T$ itself except if $ T \cong \mathbb{Z}_2^{2m-1} \times \mathbb{Z}_4 $.
We claim that $\mu$ is a quadratic form whose associated alternating bicharacter is the restriction of $\beta$ to $ T_2 \times T_2 $.
Indeed, for all $ u,v \in T_2 $, $ (X_u X_v)^2 = X_u X_v X_u X_v = \beta(u,v) \mu(u) \mu(v) I $.
So $ X_u X_v = \lambda X_{uv} $ with $ \lambda \in \{ +1 , -1 \} $; and $ (X_u X_v)^2 = (\lambda X_{uv})^2 = \mu(uv) I $.
Therefore $ \beta(u,v) = \mu(uv) \mu(u)^{-1} \mu(v)^{-1} $, and we have proved the claim.
Also remark that, if $ \mathcal{D} \cong M_n (\mathbb{C}) $ and we denote by $ f = \deg (iI) $ the semineutral element, then $\mu(f)=-1$.

As the homogeneous components have dimension $1$, the isomorphism class of the grading is completely determined by \eqref{eq-alt-bich} and \eqref{eq-quad-form}.
Conversely, we are going to find a graded division algebra for a given datum $(T,\mu)$ or $(T,\mu,\beta)$.

In order to do this, we need to recall Example \ref{exam-H-M2R-dim1}.
For the gradings in Equations \eqref{eq-H-dim1} and \eqref{eq-M2R-dim1} we get the same alternating bicharacter $ \beta : \mathbb{Z}_2^2 \times \mathbb{Z}_2^2 \longrightarrow \{ \pm 1 \} $, given by (denoting by $a$, $b$, $c$ the nontrivial elements of $\mathbb{Z}_2^2$):
\[ \beta(a,b) = \beta(b,a) = \beta(a,c) = \beta(c,a) = \beta(b,c) = \beta(c,b) = -1 \]
And all the other values equal to $1$.
This is the unique nontrivial alternating bicharacter on $\mathbb{Z}_2^2$.
However, for $\mathbb{H}$ the quadratic form $ \mu : \mathbb{Z}_2^2 \longrightarrow \{ \pm 1 \} $ is:
\[ \mu(e) = +1 \qquad \mu(a) = \mu(b) = \mu(c) = -1 \]
While for $ M_2 ( \mathbb{R} ) $ is:
\[ \mu(e) = \mu(a) = \mu(b) = +1 \qquad \mu(c) = -1 \]
Permuting $a$, $b$, $c$ we get two more quadratic forms of type I; and a simple computation tells us that these four are all the possible quadratic forms on $\mathbb{Z}_2^2$ such that its associated alternating bicharacters are nontrivial.
Remark also that, for the grading in Equation \eqref{eq-M2C-dim1}, the quadratic form $ \mu : \langle a,b^2 \rangle \longrightarrow \{ \pm 1 \} $ is:
\[ \mu(e) = \mu(a) = +1 \qquad \mu(b^2) = \mu(ab^2) = -1 \]

Let $\mu$ be a quadratic form of type I (on $ T \cong (\mathbb{Z}_2^2)^m $) and $\beta$ its associated alternating bicharacter.
By Proposition \ref{prop-basis-alt-bich}, there exists a symplectic basis $ \{ a_1 , \allowbreak b_1 , \ldots , a_m , \allowbreak b_m \} $.
The restriction of $\mu$ to $ \langle a_i,b_i \rangle \cong \mathbb{Z}_2^2 $ is one of the four quadratic forms on $\mathbb{Z}_2^2$ with nontrivial associated alternating bicharacter.
As $\mu$ is determined by $\beta$ and its values on $ a_1 , b_1 , \ldots , a_m , b_m $, we can choose as the graded algebra (after reordering the indexes $i$):
\begin{equation}\label{eq-MnR-dec}
M_2(\mathbb{R}) \otimes \ldots \otimes M_2(\mathbb{R}) \otimes \mathbb{H} \otimes \ldots \otimes \mathbb{H}
\end{equation}
Where each copy of $ M_2(\mathbb{R}) $ is graded as in Equation \eqref{eq-M2R-dim1} and each of $\mathbb{H}$ as in Equation \eqref{eq-H-dim1}.
Note that \eqref{eq-MnR-dec} is isomorphic to $M_n(\mathbb{R})$ (respectively $M_{n/2}(\mathbb{H})$) if and only if the number of copies of $\mathbb{H}$ is even (respectively odd).
So we have a different proof that the Arf invariant is well defined, because the algebras $M_n(\mathbb{R})$ and $M_{n/2}(\mathbb{H})$ are not isomorphic.

For type II we repeat the same reasoning, so we indicate only the new arguments.
If $ T \cong (\mathbb{Z}_2^2)^m \times \mathbb{Z}_2 $, the basis of Proposition \ref{prop-basis-alt-bich} will be $ \{ a_1 , \allowbreak b_1 , \ldots , a_m , \allowbreak b_m , \allowbreak f \} $.
But now, changing if necessary $a_i$ or $b_i$ by $ a_i f $ or $ b_i f $, we can get $ \mu (a_i) = \mu (b_i) = +1 $; and we can choose as the graded algebra:
\[ M_2(\mathbb{R}) \otimes \ldots \otimes M_2(\mathbb{R}) \otimes \mathbb{C} \]
Where each copy of $ M_2(\mathbb{R}) $ is graded as in Equation \eqref{eq-M2R-dim1} and $\mathbb{C}$ as in Equation \eqref{eq-C-dim1}.

While if $ T \cong (\mathbb{Z}_2^2)^{m-1} \times ( \mathbb{Z}_2 \times \mathbb{Z}_4 ) $, there are $ a_1 , b_1 , \ldots , a_m , b_m $ as in Proposition \ref{prop-basis-alt-bich}.
The only difference with the previous cases is the subgroup $ \langle a_m,b_m \rangle \cong \mathbb{Z}_2 \times \mathbb{Z}_4 $.
We know that $ \mu(f) = -1 $ (where $ f = b_m^2 $).
Changing if necessary $a_m$ by $a_m f$, we get $\mu(a_m)=1$ (and $ \mu(a_m f) = -1 $).
Therefore we can choose:
\[ M_2(\mathbb{R}) \otimes \ldots \otimes M_2(\mathbb{R}) \otimes M_2(\mathbb{C}) \]
Where each copy of $ M_2(\mathbb{R}) $ is graded as in Equation \eqref{eq-M2R-dim1} and $M_2(\mathbb{C})$ as in Equation \eqref{eq-M2C-dim1}.

Note that we have proved the assertion about the equivalence classes for $ \mathcal{D} \cong M_n ( \mathbb{C} ) $.
For $ \mathcal{D} \cong M_n ( \mathbb{R} ) $ or $ M_n ( \mathbb{H} ) $ it is enough to show that all the gradings as in Equation \eqref{eq-MnR-dec} with the same parity of copies of $\mathbb{H}$ are equivalent.
Consider the symplectic basis $ \{ a_1, b_1, a_2, b_2 \} $ of $ \mathbb{Z}_2^2 \times \mathbb{Z}_2^2 $ relative to the grading of $ \mathbb{H} \otimes \mathbb{H} $, that is, the quadratic form is determined by $ \mu(a_1) = \mu(b_1) = \mu(a_2) = \mu(b_2) = -1 $.
Then $ a_1' = a_1 a_2 $, $ b_1' = a_1 a_2 b_1 $, $ a_2' = b_1 b_2 $, $ b_2' = b_1 b_2 a_2 $ form another symplectic basis, but $ \mu(a_1') = \mu(b_1') = \mu(a_2') = \mu(b_2') = +1 $.
Therefore we can write $ \mathbb{H} \otimes \mathbb{H} $ as $ M_2(\mathbb{R}) \otimes M_2(\mathbb{R}) $ if we rename the elements of the group, so they are equivalent.
\end{proof}

\begin{remark}\label{rem-cliff}
Given an alternating bilinear map $ \beta : V \times V \longrightarrow \mathbb{F} $ on a finite-dimensional vector space $V$ over a field $\mathbb{F}$ of characteristic $2$ of maximal rank (so $\beta$ is nondegenerate if $\dim(V)$ is even and its rank is $\dim(V)-1$ if $\dim(V)$ is odd), $\beta$ is unique up to isometry, and this implies that there is a basis $ \{ e_1 , \ldots , e_r \} $ of $V$ such that $ \beta(e_i,e_j) = 1 $ for any $ i \neq j $.

Now, given a fine grading on an algebra $\mathcal{D}$ as in Theorem \ref{thm-MnR-dim1} or Theorem \ref{thm-MnC-dim1}.(1), this shows that there are elements $ e_1 , \ldots , e_r $ that generate the support of the grading with $ \beta(e_i,e_j) = -1 $ for any $ i \neq j $, where now $\beta$ is the associated alternating bicharacter of the quadratic form $\mu$ attached to the grading (the notation is multiplicative now).
Take $ X_i \in \mathcal{D}_{e_i} $ with $ X_i^2 = \mu(e_i) I $ ($ \mu(e_i) \in \{ \pm 1 \} $), for $ i = 1 , \ldots , r $.
Then $ X_i^2 = \pm I $ for any $i$, $ X_i X_j = - X_j X_i $ for any $ i \neq j $, and the elements $ X_1 , \ldots , X_r $ generate $\mathcal{D}$.

We conclude that $\mathcal{D}$ is graded isomorphic to the Clifford algebra of a nondegenerate quadratic form of signature equal to $ \vert \{ i : 1 \leq i \leq r , \, \mu(e_i) = +1 \} \vert $, on an $r$-dimensional real vector space, where the grading on the Clifford algebra is induced by a grading on the vector space (see \cite[Proposition 2.2]{AM}).
\end{remark}

\section{Coarsenings}\label{sec-coar}

As mentioned in the Introduction, we have two tools to reduce the study of division gradings with homogeneous components of dimension $2$ or $4$ to the case of dimension $1$ (see Remark \ref{rem-dim-hom-comp}).
The first is that our gradings come from coarsenings.
The second is the Double Centralizer Theorem.

\begin{remark}\label{rem-complex-grad}
If the real algebra $\mathcal{D}$ is isomorphic to $M_n(\mathbb{C})$ and it is graded so that its neutral component $\mathcal{D}_e$ is isomorphic to $\mathbb{C}$, then it can happen that $\mathcal{D}_e$ coincides with the center of the algebra.
In that case, the grading can be regarded as a grading of the complex algebra $M_n(\mathbb{C})$.
The classification of division gradings on $M_n(\mathbb{C})$ over the complex field is in \cite[Theorem 2.15]{EK}.
The isomorphism and equivalence classes that gives this classification remain the same over the real field, because the invariants that differentiate them are also preserved by isomorphisms of real algebras.
In the sequel we will study the remaining case $ \mathcal{D}_e \neq Z(\mathcal{D}) $.
\end{remark}

Consider a finite-dimensional simple real algebra $\mathcal{D}$ graded by an abelian group so that its neutral component $\mathcal{D}_e$ is isomorphic to $\mathbb{H}$, which is central simple.
Then, the Double Centralizer Theorem (see for instance \cite[Theorem 4.7]{Jacobson}) tells us that $ C_{\mathcal{D}}(\mathcal{D}_e) $ is also a simple algebra and that there is a natural isomorphism of algebras $ \mathcal{D} \cong \mathcal{D}_e \otimes C_{\mathcal{D}}(\mathcal{D}_e) $.
As $\mathcal{D}_e$ is (trivially) graded, so is its centralizer; hence the previous isomorphism is an isomorphism of graded algebras.
Therefore the classification of gradings on $ M_n (\mathbb{R}) $, $ M_n (\mathbb{C}) $ and $ M_n (\mathbb{H}) $ making the algebra a graded division algebra with homogeneous components of dimension $4$ reduces to that of dimension $1$ of $ M_{n/4} (\mathbb{H}) $, $ M_{n/2} (\mathbb{C}) $ and $ M_n (\mathbb{R}) $, respectively.
We summarize this argument as follows:

\begin{theorem}\label{thm-dim4}
Let $G$ be an abelian group and $\mathcal{D}$ a finite-dimensional simple real algebra.
Any division $G$-grading on $\mathcal{D}$ with homogeneous components of dimension $4$ is the product grading of the trivial grading on the neutral homogeneous component $\mathcal{D}_e$ and a division $G$-grading with homogeneous components of dimension $1$ on the centralizer $ C_{\mathcal{D}}(\mathcal{D}_e) $.
\end{theorem}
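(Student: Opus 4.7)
The argument is essentially sketched in the paragraph preceding the statement; my plan is to turn that sketch into a clean four-step proof.

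First, I would pin down the neutral component. By Remark \ref{rem-dim-hom-comp}, if the homogeneous components have real dimension $4$, then $\mathcal{D}_e$ is a $4$-dimensional real division algebra, hence isomorphic to $\mathbb{H}$ by Frobenius. In particular $\mathcal{D}_e$ is central simple over $\mathbb{R}$, so the Double Centralizer Theorem (\cite[Theorem 4.7]{Jacobson}) applies: $C_{\mathcal{D}}(\mathcal{D}_e)$ is simple and the multiplication map gives a natural isomorphism of real algebras $\mathcal{D} \cong \mathcal{D}_e \otimes_{\mathbb{R}} C_{\mathcal{D}}(\mathcal{D}_e)$.

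Second, I would verify that $C_{\mathcal{D}}(\mathcal{D}_e)$ is a graded subalgebra. Writing $x=\sum_{g\in G} x_g$ with $x_g \in \mathcal{D}_g$, the commutator $[x,y]$ with $y\in \mathcal{D}_e$ has homogeneous decomposition $\sum_g [x_g,y]$ with $[x_g,y]\in\mathcal{D}_g$ (here we use that $G$ is abelian); hence $[x,y]=0$ for all $y\in\mathcal{D}_e$ if and only if $x_g\in C_{\mathcal{D}}(\mathcal{D}_e)$ for each $g$. Therefore $C_{\mathcal{D}}(\mathcal{D}_e)$ inherits a $G$-grading, and under the Double Centralizer isomorphism the trivially graded factor $\mathcal{D}_e$ tensor the graded factor $C_{\mathcal{D}}(\mathcal{D}_e)$ produces the product grading; I will argue that this product grading coincides with $\Gamma$ by noting that $\mathcal{D}_e\cdot C_{\mathcal{D}}(\mathcal{D}_e)_g\subseteq \mathcal{D}_g$ for every $g$, and taking homogeneous parts of an arbitrary element of $\mathcal{D}_g$ written in the tensor decomposition shows the reverse inclusion.

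Third, I would compute dimensions and check the division property. The tensor structure makes the multiplication map $\mathcal{D}_e\otimes C_{\mathcal{D}}(\mathcal{D}_e)_g \to \mathcal{D}_g$ injective; combined with $\dim_{\mathbb{R}}\mathcal{D}_e=4=\dim_{\mathbb{R}}\mathcal{D}_g$ this forces $\dim_{\mathbb{R}} C_{\mathcal{D}}(\mathcal{D}_e)_g = 1$ for every $g$ in the support. For the division property, any nonzero $b\in C_{\mathcal{D}}(\mathcal{D}_e)_g$ has an inverse $b^{-1}\in \mathcal{D}_{g^{-1}}$ in the graded division algebra $\mathcal{D}$, and since $b$ commutes with $\mathcal{D}_e$ so does $b^{-1}$, whence $b^{-1}\in C_{\mathcal{D}}(\mathcal{D}_e)_{g^{-1}}$.

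There is essentially no hard step here; the only thing to be careful about is keeping track of the three compatibilities (tensor decomposition of algebras, compatibility with the grading, and the centralizer being graded), and all three reduce to direct computations using abelianness of $G$ and the fact that the Double Centralizer isomorphism is canonical. The content of the theorem is really that, once the dimension of the homogeneous components is $4$, the Double Centralizer Theorem automatically factors the grading through the trivial grading on $\mathbb{H}$, passing the problem down to a dimension $1$ grading on a smaller matrix algebra (on $M_{n/4}(\mathbb{H})$, $M_{n/2}(\mathbb{C})$ or $M_n(\mathbb{R})$ according to the Brauer-type product $\mathbb{H}\otimes_{\mathbb{R}} C_{\mathcal{D}}(\mathcal{D}_e)\cong \mathcal{D}$).
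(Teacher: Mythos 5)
Your proof is correct and follows the same route as the paper: identify $\mathcal{D}_e\cong\mathbb{H}$ via Remark \ref{rem-dim-hom-comp} and Frobenius, note it is central simple, apply the Double Centralizer Theorem, and observe that since $\mathcal{D}_e$ sits in the neutral component the resulting tensor decomposition is an isomorphism of graded algebras. The paper states this more tersely, leaving implicit the verifications you spell out (that $C_{\mathcal{D}}(\mathcal{D}_e)$ is a graded subalgebra, the dimension count, and closure of homogeneous inverses in the centralizer); the only small redundancy in your write-up is the appeal to abelianness of $G$ in showing the centralizer is graded, which is not actually needed there since $\mathcal{D}_e$ has degree $e$ and so $x_g y, y x_g\in\mathcal{D}_g$ automatically.
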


Let us finally turn our attention to division gradings with homogeneous components of dimension two.

\begin{proposition}\label{prop-ref}
Let $T$ be a finite abelian group, and let $\mathcal{D}$ be a finite-di\-men\-sio\-nal simple real algebra.
Consider a grading on $\mathcal{D}$ with support $T$ making $\mathcal{D}$ a graded division algebra so that the dimension of its homogeneous components is not $1$ and that the neutral component does not coincide with the center of $\mathcal{D}$.
Then, there exists a proper refinement of the grading.
\end{proposition}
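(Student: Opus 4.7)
The plan is to split by the dimension $d \in \{2,4\}$ of the homogeneous components (Remark \ref{rem-dim-hom-comp}) and produce a proper refinement in each case.

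When $d=4$, so $\mathcal{D}_e \cong \mathbb{H}$ (the hypothesis $\mathcal{D}_e \neq Z(\mathcal{D})$ is automatic since $Z(\mathcal{D})$ is $\mathbb{R}$ or $\mathbb{C}$), I would apply Theorem \ref{thm-dim4}: the grading is the product of the trivial grading on $\mathcal{D}_e$ with a dim-$1$ division grading on $C_{\mathcal{D}}(\mathcal{D}_e)$. Replacing the trivial grading on $\mathcal{D}_e \cong \mathbb{H}$ by the nontrivial $\mathbb{Z}_2^2$-grading of Example \ref{exam-H-M2R-dim1} yields a proper refinement of the product grading.

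When $d=2$, so $\mathcal{D}_e \cong \mathbb{C}$, the hypothesis $\mathcal{D}_e \neq Z(\mathcal{D})$ forces $\mathcal{D}_e \cap Z(\mathcal{D}) = \mathbb{R} I$; hence any $J \in \mathcal{D}_e$ with $J^2 = -I$ is non-central. For nonzero homogeneous $X \in \mathcal{D}_t$, the element $X^{-1} J X \in \mathcal{D}_e$ squares to $-I$ and so equals $\pm J$; this defines a surjective group homomorphism $\chi \colon T \to \{\pm 1\}$ with kernel $K$ of index $2$. My strategy is to refine by $T \times \mathbb{Z}_2$: after choosing $X_t \in \mathcal{D}_t$ for each $t$ (with $X_e = I$), split $\mathcal{D}_t = \mathbb{R} X_t \oplus \mathbb{R} J X_t$ and assign the two summands degrees $(t,0)$ and $(t,1)$ respectively. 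Writing $X_u X_v = \gamma(u,v) X_{uv}$ for the factor system and using $JX_s = \chi(s) X_s J$, one checks by direct computation that the four products $(J^{\varepsilon} X_u)(J^{\delta} X_v)$ land in the component of degree $(uv, \varepsilon+\delta)$ precisely when $\gamma$ takes values in $\mathbb{R}^{\times}$.

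The main obstacle is to arrange $\gamma \in \mathbb{R}^{\times}$ by a careful choice of representatives $X_t$. I would proceed by normalizing: for $t \in K$, rescale $X_t$ using $\mathbb{C}^{\times}$ (which commutes with $X_t$ since $\chi(t)=1$) to place it on the one-dimensional $\mathbb{R}$-line of $\mathcal{D}_t$ compatible with the products $X_u X_v$ where $uv = t$; for $t \notin K$, fix one $t_0 \notin K$ with a choice $X_{t_0}$ and define $X_{kt_0} := X_k X_{t_0}$ for $k \in K$. A case analysis by the $K$-membership of $u$ and $v$, invoking the commutation $X_s c = c^{\chi(s)} X_s$ for $c \in \mathbb{C}$ together with the normalizations, then yields the reality of $\gamma$. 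The delicate point is the consistency of these normalizations across all of $T$; here the cocycle identity for $\gamma$ and the bimultiplicativity of $\chi$ should reduce the verification to tracking how the complex conjugation induced by elements of $T \setminus K$ interacts with the chosen real lines in each $\mathcal{D}_t$.
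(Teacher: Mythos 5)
Your $d=4$ branch is correct: Theorem~\ref{thm-dim4} does not rely on Proposition~\ref{prop-ref} (it is proved directly from the Double Centralizer Theorem), so there is no circularity, and refining the $\mathbb{H}$-factor gives a proper refinement. The paper's route is simpler (take any trace-zero $X\in\mathcal{D}_e\cong\mathbb{H}$; then $X^2\in\mathbb{R}_{<0}I\subseteq Z(\mathcal{D})$, $X$ is homogeneous of degree $e$, and $\mathrm{Ad}(X)$ is an involutive grading-preserving automorphism whose eigenspace decomposition refines the grading), but yours works.

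The $d=2$ branch is where the real difficulty lies, and your argument has a genuine gap. You correctly identify the index-$2$ subgroup $K=\ker\chi$ and propose refining by splitting $\mathcal{D}_t=\mathbb{R}X_t\oplus\mathbb{R}JX_t$, but this requires exhibiting a consistent choice of the $X_t$ so that the factor system $\gamma$ is $\mathbb{R}^{\times}$-valued. You explicitly defer this (``the delicate point is the consistency of these normalizations across all of $T$'') rather than proving it, yet this consistency is precisely equivalent to the nontrivial fact the proposition rests on. Your normalization for $t\in K$ is not well-defined: there is no canonical ``$\mathbb{R}$-line compatible with the products'' inside $\mathcal{D}_t\cong\mathbb{C}$ until you already have the refinement you are trying to build. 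And your normalization $X_{kt_0}:=X_kX_{t_0}$ for $t\notin K$ only makes $\gamma$ real if $X_{t_0}$ is chosen so that $X_{t_0}^2\in Z(\mathcal{D})$ and the $X_k$ ($k\in K$) lie in the $+1$-eigenspace of $\mathrm{Ad}(X_{t_0})$ --- but you never establish that such an $X_{t_0}$ exists.

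The paper's proof sidesteps the cocycle normalization entirely. Its key lemma is that \emph{any} nonzero homogeneous $X$ of degree $t\notin K$ satisfies $X^2\in Z(\mathcal{D})$: one first shows $XJ=-JX$ and $X^{|t|}\in\mathbb{R}I$ with $|t|$ even, and then for arbitrary homogeneous $Y$ writes $XY=(aI+bJ)YX$, derives $X^2Y=c^2YX^2$ with $c^2=a^2+b^2>0$, and concludes $c^{|t|}=1$ from $X^{|t|}\in Z(\mathcal{D})$, hence $c=1$. Once $X^2\in Z(\mathcal{D})$ is in hand, $\tau=\mathrm{Ad}(X)$ is an involutive automorphism preserving each $\mathcal{D}_t$, and its $\pm1$-eigenspaces in each $\mathcal{D}_t$ give a $\mathbb{Z}_2\times T$-refinement with no choice of representatives needed; it is proper because $X$ does not commute with $J$. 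If you want to keep your cocycle formulation, you must first prove this lemma, after which your scheme does go through (take $X_{t_0}:=X$ and choose $X_k$ in the $\tau$-fixed line of $\mathcal{D}_k$ for $k\in K$); as written, the proposal assumes what it most needs to show.
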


\begin{proof}
Our idea is to find a homogeneous element $X$ such that $ X^2 \in Z(\mathcal{D}) $ and which does not commute with $\mathcal{D}_e$.
Then we can consider the inner automorphism $ \tau : d \in \mathcal{D} \longrightarrow XdX^{-1} \in \mathcal{D} $; which is involutive, since $ X^2 \in Z(\mathcal{D}) $, and satisfies $ \tau (\mathcal{D}_t) \subseteq \mathcal{D}_t $ for all $ t \in T $, because $X$ is homogeneous.
Therefore we can split the homogeneous components as direct sum of subspaces: $ \mathcal{D}_t = \mathcal{D}_{(\bar{0},t)} \oplus \mathcal{D}_{(\bar{1},t)} $, where $ \mathcal{D}_{(\bar{0},t)} := \{ d \in \mathcal{D}_t \mid \tau(d)=d \} $ and $ \mathcal{D}_{(\bar{1},t)} := \{ d \in \mathcal{D}_t \mid \tau(d)=-d \} $.
Clearly this defines a refinement of the grading by the group $ \mathbb{Z}_2 \times T $.
The refinement is proper, since $X$ does not commute with $\mathcal{D}_e$, and it remains a division grading, so necessarily all the homogeneous components have the same dimension and the support is the whole $ \mathbb{Z}_2 \times T $.

If the division algebra $\mathcal{D}_e$ is isomorphic to $\mathbb{H}$, we can take as $X$ any nonzero element of trace zero of $\mathcal{D}_e$.
So assume that $ \mathcal{D}_e = \mathbb{R} I \oplus \mathbb{R} J \cong \mathbb{C} $, where $I$ is the unity of $\mathcal{D}$ and $J^2=-I$.

Let $X$ be a homogeneous element of degree $t$ which does not commute with $J$.
There exists $ \lambda \in \mathcal{D}_e $ such that
$ X J = \lambda J X $, therefore $ J^2 X = X J^2 = \lambda J X J = \lambda J \lambda J X = \lambda^2 J^2 X $; so $\lambda=-1$ and $ X J = - J X $.
We can write $ X^{ \vert t \vert } = \alpha I + \beta J $, with $ \alpha , \beta \in \mathbb{R} $; and if $ \beta \neq 0 $, then $ J = \beta^{-1}( X^{ \vert t \vert } - \alpha I ) $, which commutes with $X$, a contradiction; thus $ X^{ \vert t \vert } \in Z(\mathcal{D}) $.
If $ \vert t \vert $ is odd, then $ X^{ \vert t \vert } J = - J X^{ \vert t \vert } $, another contradiction, so $ \vert t \vert $ is even.

Let $Y$ be a homogeneous element, we are going to prove that $Y$ commutes with $X^2$, so $ X^2 \in Z(\mathcal{D}) $.
We can write $ XY = (aI+bJ) YX $, with $ a,b \in \mathbb{R} $.
Therefore $ X^2 Y = X (aI+bJ) YX = (aI-bJ) XYX = (aI-bJ)(aI+bJ) YX^2 = c^2 YX^2 $, where $ c^2 := a^2+b^2 $.
Thus $ Y X^{ \vert t \vert } = X^{ \vert t \vert } Y = X^2 \cdots X^2 Y = c^{ \vert t \vert } Y X^{ \vert t \vert } $; which means that $c^{ \vert t \vert }=1$, and so $c^2=1$, that is, $ X^2 Y = YX^2 $.
\end{proof}

\begin{remark}\label{rem-ref}
The computations of the previous proof will be useful in the proof of Theorems \ref{thm-MnR-dim2} and \ref{thm-MnC-dim2}.
So we record for future reference that, if $ \mathcal{D}_e = \mathbb{R} I \oplus \mathbb{R} J \cong \mathbb{C} $ (where $I$ is the unity of $\mathcal{D}$ and $J^2=-I$), $ \mathcal{D}_e \neq Z(\mathcal{D}) $, $K$ is the support of the centralizer of the neutral component ($ K := \mathrm{supp} ( C_{\mathcal{D}} (\mathcal{D}_e) ) $), and $X,X'$ are two nonzero homogeneous elements of degree $ g \in T \setminus K $; then $XJ=-JX$ and:
\begin{equation}\label{eq-rem-ref}
X^2 \in Z(\mathcal{D}) \qquad (X')^2 \in \mathbb{R}_{>0} X^2
\end{equation}
For the last assertion, note that $ X' = (aI+bJ) X $ for some $ a,b \in \mathbb{R} $.

Moreover, $X$ defines a refinement of the grading with support $ \langle h \rangle \times T $ (where $h$ has order $2$) and such that the homogeneous elements of degree in $ \{ e \} \times T $ commute with $X$, while those of degree in $ \{ h \} \times T $ anticommute with $X$.
\end{remark}

\begin{theorem}\label{thm-MnR-dim2}
Let $G$ be an abelian group, and $\mathcal{D}$ a real algebra isomorphic to $M_n(\mathbb{R})$ (respectively $M_n(\mathbb{H})$).
If there is a division $G$-grading $\Gamma$ on $\mathcal{D}$ with homogeneous components of dimension $2$, then $ n = 2^m $ for some integer $ m \geq 1 $ (respectively $ m \geq 0 $) and the support of $\Gamma$ is a subgroup $T$ of $G$ isomorphic to $\mathbb{Z}_2^{2m-1}$ (respectively $\mathbb{Z}_2^{2m+1}$).
The isomorphism classes of such gradings are in one-to-one correspondence with the triples $(T,K,\nu)$ formed by a subgroup $T$ of $G$ isomorphic to $\mathbb{Z}_2^{2m-1}$ (respectively $\mathbb{Z}_2^{2m+1}$), an index $2$ subgroup $K$ of $T$, and a positive (respectively negative) nice map $\nu$ on $ T \setminus K $ of type I.
All such gradings belong to one equivalence class, represented by 2.(a) (respectively 2.(b)) in the list of Section \ref{sec-equiv}.
\end{theorem}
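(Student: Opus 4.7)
The plan is to reduce to the fine case of Theorem \ref{thm-MnR-dim1} via the proper refinement furnished by Proposition \ref{prop-ref}, and then to translate between the fine data $(\tilde T,\tilde\mu)$ and the coarse invariants $(T,K,\nu)$. Since $\mathcal D_e$ is a two-dimensional real division algebra we have $\mathcal D_e\cong\mathbb C$; fix $J\in\mathcal D_e$ with $J^2=-I$. Because $\mathcal D\cong M_n(\mathbb R)$ or $M_n(\mathbb H)$, the centre of $\mathcal D$ is $\mathbb RI\neq\mathcal D_e$, so Proposition \ref{prop-ref} refines the grading by $\tilde T=\langle h\rangle\times T$ with dimension-$1$ components. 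Applying Theorem \ref{thm-MnR-dim1} to this refinement forces $\tilde T\cong\mathbb Z_2^{2m}$ (resp.\ $\mathbb Z_2^{2m+2}$), so $T\cong\mathbb Z_2^{2m-1}$ (resp.\ $\mathbb Z_2^{2m+1}$) and $n=2^m$, and gives a quadratic form $\tilde\mu$ on $\tilde T$ of type I with $\mathrm{Arf}(\tilde\mu)=+1$ (resp.\ $-1$).

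The next step is to read off $(T,K,\nu)$ from $(\tilde T,\tilde\mu)$. The argument at the end of the proof of Proposition \ref{prop-ref} shows that for any nonzero homogeneous $Y\in\mathcal D_t$ one has $YJY^{-1}=\pm J$ with a sign $\chi(t)$ depending only on $t$; then $K:=\ker\chi=\mathrm{supp}(C_{\mathcal D}(\mathcal D_e))$ has index $2$ in $T$ (since $J\notin Z(\mathcal D)$). For $g\in T\setminus K$, Remark \ref{rem-ref} gives $X^2\in\mathbb RI$ with sign independent of the choice of nonzero $X\in\mathcal D_g$, so $\nu(g):=\mathrm{sign}(X^2/I)$ is well-defined. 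In the refinement $J$ has degree $h$, hence $\tilde\mu(h)=-1$ and $\tilde K:=h^{\perp}$ equals $\pi^{-1}(K)$ under $\pi\colon\tilde T\to T$. For any lifts $\tilde g,\tilde k$ of $g\in T\setminus K$ and $k\in K$, one checks $\nu(g)=\tilde\mu(\tilde g)$ and $\mu_g(k)=\tilde\beta(\tilde g,\tilde k)\,\tilde\mu(\tilde k)$, both independent of the lift thanks to $\tilde\mu(h)=-1$ and $\tilde\beta(h,\tilde K)=\{1\}$. The quadratic-form identity for $\tilde\mu$ then yields that $\mu_g$ is a quadratic form on $K$ whose associated bicharacter is the descent of $\tilde\beta|_{\tilde K}$ to $\tilde K/\langle h\rangle=K$, nondegenerate because $\mathrm{rad}(\tilde\beta|_{\tilde K})=\langle h\rangle$; so $\nu$ is nice of type I. A value-count shows that $\tilde\mu$ is equidistributed on $\tilde K$ (the involution $\tilde k\mapsto\tilde k h$ negates it) while the two lifts of each $g\in T\setminus K$ carry the same value $\nu(g)$, giving $\mathrm{Arf}(\tilde\mu)=\nu(g)\,\mathrm{Arf}(\mu_g)$; hence $\nu$ is positive (resp.\ negative).

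For the converse I would, given $(T,K,\nu)$, choose $g_0\in T\setminus K$ and define $\tilde T:=\langle h\rangle\times T$ with $\tilde\mu(h):=-1$, $\tilde\mu(\bar 0,k):=\mu_{g_0}(k)$, $\tilde\mu(\bar 0,g_0 k):=\nu(g_0 k)$, and extend $\tilde\mu$ by the quadratic-form identity with $\tilde\beta$ given by $\beta_{\mu_{g_0}}$ on $K\times K$, $\tilde\beta(h,(\bar 0,k))=1$, $\tilde\beta(h,(\bar 0,g_0))=-1$, and $\tilde\beta((\bar 0,k),(\bar 0,g_0))=1$. A direct radical computation shows $\tilde\beta$ is nondegenerate; the Arf invariant of $\tilde\mu$ matches the positivity of $\nu$ by the same majority-value count as above; and Theorem \ref{thm-MnR-dim1} then supplies a fine $\tilde T$-grading on $M_n(\mathbb R)$ (resp.\ $M_n(\mathbb H)$) whose $\pi$-coarsening realises $(T,K,\nu)$. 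Two gradings with the same $(T,K,\nu)$ admit, after compatible choices of refining element and section $g_0$, fine refinements with the same $(\tilde T,\tilde\mu)$, hence are isomorphic by Theorem \ref{thm-MnR-dim1} via an isomorphism that descends to $T$. The single-equivalence-class claim is then a change of symplectic basis of $\tilde T$ preserving $h$, exactly as in the last paragraph of the proof of Theorem \ref{thm-MnR-dim1}, which identifies any such grading with 2.(a) (resp.\ 2.(b)).

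The main obstacle will be the converse construction: the extension of $\beta_{\mu_{g_0}}$ to $\tilde\beta$ and of $\mu_{g_0}$ to $\tilde\mu$ involves several auxiliary choices ($g_0$, the splitting of $\tilde T$, the refining element $X$), and one must verify both that any such choice gives a quadratic form with the prescribed Arf invariant and that the remaining freedoms can be absorbed into a $T$-graded isomorphism; the forward direction and the type/positivity correspondence then fall out once the formula $\mu_g(k)=\tilde\beta(\tilde g,\tilde k)\,\tilde\mu(\tilde k)$ is in hand.
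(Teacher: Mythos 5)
Your proof is correct, but it follows a genuinely different route from the paper's. Both proofs use Proposition~\ref{prop-ref} to pin down the group structure of $T$, but after that they diverge. You pass to the full dimension-$1$ refinement $\tilde\Gamma$ with support $\tilde T=\langle h\rangle\times T$, apply Theorem~\ref{thm-MnR-dim1} to $\tilde\Gamma$, and then translate $(\tilde T,\tilde\mu)$ together with the distinguished element $h=\deg(J)$ into the coarse invariants $(T,K,\nu)$; the key identities $\nu(g)=\tilde\mu(\tilde g)$, $\mu_g(k)=\tilde\beta(\tilde g,\tilde k)\tilde\mu(\tilde k)$, the identification $K=\pi(h^\perp)$, and the majority-value count $\mathrm{Arf}(\tilde\mu)=\nu(g)\,\mathrm{Arf}(\mu_g)$ all check out. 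The paper instead never refers the coarse grading to the full refinement: after establishing the group shape, it fixes $g\in T\setminus K$, observes that $\mathcal D_e\oplus\mathcal D_g$ is a central simple graded subalgebra isomorphic to $M_2(\mathbb R)$ or $\mathbb H$ graded as in Example~\ref{exam-H-M2R-M2C-dim2}, and applies the Double Centralizer Theorem to obtain $\mathcal D\cong(\mathcal D_e\oplus\mathcal D_g)\otimes C_{\mathcal D}(\mathcal D_e\oplus\mathcal D_g)$, where the second factor is a graded division algebra with one-dimensional components and support $K$ whose quadratic form is exactly $\mu_g$. The paper's route has the advantage that the tensor decomposition directly produces the representatives 2.(a)/2.(b) from the list in Section~\ref{sec-equiv} and makes the converse (existence for a given $(T,K,\nu)$) essentially immediate; your route is more uniform in reducing everything to Theorem~\ref{thm-MnR-dim1}, but the converse requires you to hand-build $(\tilde T,\tilde\beta,\tilde\mu)$ from $(T,K,\nu)$ and verify nondegeneracy and the Arf count, and the uniqueness step needs the extra observation (which you do supply) that after fixing $g$ and a refining element the resulting $\tilde\mu$ is determined by $(T,K,\nu)$, so that Theorem~\ref{thm-MnR-dim1}'s isomorphism descends through $\pi\colon\tilde T\to T$.
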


\begin{theorem}\label{thm-MnC-dim2}
Let $G$ be an abelian group, and $\mathcal{D}$ a real algebra isomorphic to $M_n(\mathbb{C})$.
If there is a division $G$-grading $\Gamma$ on $\mathcal{D}$ with homogeneous components of dimension $2$ \emph{such that the neutral component does not coincide with the center of $\mathcal{D}$}, then $ n = 2^m $ (for some integer $ m \geq 1 $) and the support of $\Gamma$ is a subgroup $T$ of $G$ isomorphic either to $\mathbb{Z}_2^{2m}$ or to $ \mathbb{Z}_2^{2m-2} \times \mathbb{Z}_4 $.
Moreover, the following possibilities appear:
\begin{enumerate}
	\item If the support is isomorphic to $\mathbb{Z}_2^{2m}$, the isomorphism classes of such gradings are in one-to-one correspondence with the triples $(T,K,\nu)$ formed by a subgroup $T$ of $G$ isomorphic to $\mathbb{Z}_2^{2m}$, an index $2$ subgroup $K$ of $T$, and a nice map $\nu$ on $ T \setminus K $ of type II.
	All such gradings belong to one equivalence class, represented by 2.(c) in the list of Section \ref{sec-equiv}.
	\item If the support is isomorphic to $ \mathbb{Z}_2^{2m-2} \times \mathbb{Z}_4 $, the isomorphism classes of such gradings are in one-to-one correspondence with the union of the following two sets (for such a subgroup $ T \cong \mathbb{Z}_2^{2m-2} \times \mathbb{Z}_4 $, denote by $f$ the generator of $ T^2 = \{ t^2 \mid t \in T \} $, and by $T_2$ the subgroup $ \{ t \in T \mid t^2=e \} $, as in Remark \ref{rem-basis-alt-bich}):
	\begin{enumerate}
		\item[(i)] The set of $4$-tuples $(T, \allowbreak K, \allowbreak \beta, \allowbreak \nu)$ formed by a subgroup $T$ of $G$ isomorphic to $ \mathbb{Z}_2^{2m-2} \times \mathbb{Z}_4 $, an index $2$ subgroup $K$ of $T$ different from $T_2$, an alternating bicharacter $\beta$ on $K$ of type II, and a nice map $\nu$ on $ T_2 \setminus ( K \cap T_2 ) $; such that the restriction of $\beta$ to $ ( K \cap T_2 ) \times ( K \cap T_2 ) $ is the associated alternating bicharacter of $\nu$, and that the quadratic forms induced by $\nu$ take the value $-1$ on $f$.
		\item[(ii)] The set of pairs $(T,[\nu])$ for\-med by a subgroup $T$ of $G$ isomorphic to $ \mathbb{Z}_2^{2m-2} \times \mathbb{Z}_4 $, and an equivalence class $[\nu]$ of the quotient set of nice maps $ \nu : T \setminus T_2 \longrightarrow \{ \pm 1 \} $ of type II with semineutral element $f$, with the equivalence relation $ \nu \sim \nu' $ if either $ \nu' = \nu $ or $ \nu' = - \nu $.
	\end{enumerate}
	All the gradings corresponding to 2.(i) (respectively 2.(ii)) are equivalent.
	But the gradings corresponding to 2.(i) are not equivalent to those corresponding to 2.(ii).
	The former are represented by 2.(d) in the list of Section \ref{sec-equiv}, and the latter by 2.(e).
\end{enumerate}
\end{theorem}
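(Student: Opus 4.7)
The plan follows the pattern of Theorem \ref{thm-MnR-dim2}: reduce to Theorem \ref{thm-MnC-dim1} by refining via Proposition \ref{prop-ref}, and then translate the classifying data back. For the setup, write $\mathcal{D}_e = \mathbb{R} I \oplus \mathbb{R} J$ (with $J^2 = -I$), let $K := \mathrm{supp}(C_{\mathcal{D}}(\mathcal{D}_e))$, and use Lemma \ref{lem-iI} to locate $iI \in \mathcal{D}_f$ for a unique $f \in T$ of order $2$; necessarily $f \in K$ (since $iI$ commutes with $J$) and $f \neq e$ (since $\mathcal{D}_e \neq Z(\mathcal{D})$). The computation in the proof of Proposition \ref{prop-ref} shows that each $\mathcal{D}_t$ either commutes with $J$ (iff $t \in K$) or anticommutes with $J$, so $K$ is a subgroup of index $2$ in $T$. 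Picking any nonzero $X \in \mathcal{D}_g$ with $g \in T \setminus K$, Remark \ref{rem-ref} gives $X^2 \in Z(\mathcal{D})$, so the inner automorphism $\mathrm{Ad}(X)$ refines $\Gamma$ into a one-dimensional grading $\widetilde{\Gamma}$ with support $\widetilde{T} = \langle h \rangle \times T$. Applying Theorem \ref{thm-MnC-dim1} to $\widetilde{\Gamma}$, the classifying quadratic form has semineutral element $(e,f)$ because $iI$ commutes with $X$.

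Next, I would extract the invariants of $\Gamma$ directly. As in the proof of Theorem \ref{thm-MnR-dim1}, the relations $X_u X_v = \beta(u,v) X_v X_u$ define an alternating bicharacter $\beta$ on $T$ (independent of the choice of $X_u$ since $X_u^2$ is central by Remark \ref{rem-ref}); and for each $t$ with $t^2 \in \{e,f\}$ the normalized square $X_t^2 \in \{\pm I, \pm iI\}$ gives a sign $\nu(t) \in \{\pm 1\}$. The case analysis then proceeds as follows. When $T \cong \mathbb{Z}_2^{2m}$, we have $\widetilde{T} \cong \mathbb{Z}_2^{2m+1}$ and Theorem \ref{thm-MnC-dim1}.(1) applies; the data reduces to a nice map $\nu$ of type II on $T \setminus K$, giving case (1). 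When $T \cong \mathbb{Z}_2^{2m-2} \times \mathbb{Z}_4$, we have $\widetilde{T} \cong \mathbb{Z}_2^{2m-1} \times \mathbb{Z}_4$ and Theorem \ref{thm-MnC-dim1}.(2) applies, and the dichotomy between 2.(i) and 2.(ii) is precisely whether $K$ contains the order-$4$ elements of $T$ ($K \neq T_2$) or not ($K = T_2$). In the former, $\beta$ restricts to an alternating bicharacter of type II on $K$ with semineutral element $f$, and $\nu$ restricts to a nice map on $T_2 \setminus (K \cap T_2)$. In the latter, every $t \in T \setminus K$ has order $4$ with $t^2 = f$, so $X_t^2 \in \mathbb{R}(iI)$ and $\nu$ is a nice map on $T \setminus T_2$.

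For existence I would exhibit the models in Section \ref{sec-equiv} (2.(c), 2.(d), 2.(e)) realizing each datum, and verify that all gradings with matching data are isomorphic to the corresponding model by building an isomorphism one symplectic pair at a time, using Proposition \ref{prop-basis-alt-bich} on $K$ (and, after quotienting by $\langle f \rangle$, on $T/\langle f \rangle$). The main obstacle is the emergence of the equivalence $[\nu] = \{\nu, -\nu\}$ in case 2.(ii): the choice between the two square roots $iI$ and $-iI$ of $-I$ in $Z(\mathcal{D})$ is not algebraically distinguished, and flipping it negates $\nu$ globally; in case 2.(i) this ambiguity is absorbed because some $t \in T \setminus K$ lies in $T_2$, so $X_t^2 \in \mathbb{R} I$ fixes a sign convention independent of the choice of $iI$. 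The same distinction proves that 2.(d) and 2.(e) are inequivalent: the $\mathbb{R}$-span of the squares of the nonzero homogeneous elements of degree outside $K$ equals $\mathbb{R} I$ in 2.(d) but $\mathbb{R}(iI)$ in 2.(e), and this is visibly an invariant of the equivalence class.
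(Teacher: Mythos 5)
Your overall strategy (refine to a one-dimensional grading, invoke Theorem~\ref{thm-MnC-dim1}, extract invariants, exhibit models) is in the same spirit as the paper's, but the paper does not simply apply Theorem~\ref{thm-MnC-dim1} to the refinement $\widetilde\Gamma$ and ``translate back''. Instead it applies the Double Centralizer Theorem to a carefully chosen \emph{central simple} graded subalgebra: $\mathcal{D}_e\oplus\mathcal{D}_g$ (with $g\in T\setminus K$ of order $2$) in cases~(1) and~2.(i), and a subalgebra $\mathcal{F}$ of support $A$ (with $T\cong\langle g\rangle\times A$) in case~2.(ii), whose centrality is itself established by studying a refinement. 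This decomposition is what makes the uniqueness/existence dictionary precise; the refinement alone classifies $\widetilde\Gamma$, but different choices of $X$ give nonisomorphic $\widetilde\Gamma$'s, so more work is needed to see which part of that data descends to $\Gamma$. Your sketch does not address this.

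There is also a concrete gap in your definition of the invariants. You claim ``the relations $X_uX_v=\beta(u,v)X_vX_u$ define an alternating bicharacter $\beta$ on $T$ (independent of the choice of $X_u$ since $X_u^2$ is central by Remark~\ref{rem-ref}).'' But for $u,v\in T$ the commutator $X_uX_vX_u^{-1}X_v^{-1}$ lies a priori in $\mathcal{D}_e\cong\mathbb{C}$, not in $\mathbb{R}I$, and it genuinely depends on the choice of $X_u$ when $u\notin K$: if $X_u'=\lambda X_u$ with $\lambda\in\mathcal{D}_e$ and $v\notin K$, then the commutator changes by the factor $\lambda\bar\lambda^{-1}$, which can be any unit complex number. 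Remark~\ref{rem-ref} gives $X_u^2\in Z(\mathcal{D})$, but this does not rescue well-definedness. The paper avoids this by defining $\beta$ only on $K\times K$ (it is the $\beta$ of Theorem~\ref{thm-MnC-dim1} for the graded subalgebra $C_{\mathcal{D}}(\mathcal{D}_e\oplus\mathcal{D}_g)$, whose homogeneous components are $1$-dimensional over $\mathbb{R}$); this is then automatically $\{\pm1\}$-valued. If you want to follow your route you must restrict $\beta$ to $K\times K$ from the outset and argue that there it is independent of choices and takes values in $\{\pm1\}$.

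Finally, the invariant you propose to separate 2.(d) from 2.(e) is miscomputed. You assert that the $\mathbb{R}$-span of the squares of nonzero homogeneous elements of degree outside $K$ is $\mathbb{R}I$ in case 2.(d). But when $K\neq T_2$ the set $T\setminus K$ contains elements of order $4$ (one checks $\lvert(T\setminus T_2)\setminus K\rvert=\lvert T\rvert/4>0$), whose normalized squares lie in $\mathbb{R}(iI)$, so the span is all of $Z(\mathcal{D})$, not $\mathbb{R}I$. The two spans ($Z(\mathcal{D})$ in 2.(d) versus $\mathbb{R}(iI)$ in 2.(e)) are still different, so the conclusion can be salvaged, but the cleanest invariant is the one the paper uses: whether $K=\mathrm{supp}\,C_{\mathcal{D}}(\mathcal{D}_e)$ contains elements of order $4$, equivalently whether $K\neq T_2$.
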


\begin{proof}[Proof of Theorems \ref{thm-MnR-dim2} and \ref{thm-MnC-dim2}]
Write $ \mathcal{D}_e = \mathbb{R} I \oplus \mathbb{R} J $ ($ \cong \mathbb{C} $), where $I$ is the unity and $J^2=-I$.
We start with the case $ \mathcal{D} \cong M_n(\mathbb{R}) $.
As the grading comes from a coarsening, the support $T$ is isomorphic to $ \mathbb{Z}_2 \times \mathbb{Z}_2^{2m-2} $.
Call $K$ the support of the centralizer of the neutral component:
\[ K := \mathrm{supp} ( C_{\mathcal{D}} (\mathcal{D}_e) ) \]
It is an invariant of the isomorphism class of the grading.
Equation \eqref{eq-rem-ref} gives us the other invariant, $ \nu : T \setminus K \longrightarrow \{ \pm 1 \} $, as the map defined by the equation:
\[ Y^2 = \nu(t) I \]
For all $ t \in T \setminus K $, and $ Y \in \mathcal{D}_t $ ``of norm $1$'', that is, such that $ Y^4 = I $.

Fix $ g \in T \setminus K$.
There exists an $ X \in \mathcal{D}_g $ such that $ X J = - J X $ and $ X^2 = \nu(g) I $ (see Remark \ref{rem-ref}), so the graded subalgebra $ \mathcal{D}_e \oplus \mathcal{D}_g $ is isomorphic to $M_2(\mathbb{R})$ or $\mathbb{H}$, graded as in Example \ref{exam-H-M2R-M2C-dim2}; in particular it is central simple.
We can apply the Double Centralizer Theorem to write $\mathcal{D}$ in the form:
\[ M_n(\mathbb{R}) \cong \mathcal{D} \cong ( \mathcal{D}_e \oplus \mathcal{D}_g ) \otimes C_{\mathcal{D}} ( \mathcal{D}_e \oplus \mathcal{D}_g ) \cong \left\{
	\begin{array}{l}
		M_2(\mathbb{R}) \otimes M_{n/2}(\mathbb{R}) \\
		\mathbb{H} \otimes M_{n/4}(\mathbb{H})
	\end{array}
\right. \]
Notice that the (graded division) subalgebra $ C_{\mathcal{D}} ( \mathcal{D}_e \oplus \mathcal{D}_g ) $ is isomorphic to $M_{n/2}(\mathbb{R})$ or $M_{n/4}(\mathbb{H})$ because it is simple.
In the first case $\nu(g)=+1$, while $\nu(g)=-1$ in the second case.
The support of $ C_{\mathcal{D}} ( \mathcal{D}_e \oplus \mathcal{D}_g ) $ is an index $2$ subgroup of $T$, and it is contained in $K$, so it is $K$.
If $ \mu : K \longrightarrow \{ \pm 1 \} $ is the quadratic form associated to the isomorphism class of $ C_{\mathcal{D}} ( \mathcal{D}_e \oplus \mathcal{D}_g ) $,
then $ \nu (gk) = \nu(g) \mu (k) $ for all $ k \in K $.
Therefore $\nu$ is a positive nice map of type I, because the Arf invariant of $\mu$ is $+1$ in the first case and $-1$ in the second case, and $\nu$ determines the isomorphism class of $ C_{\mathcal{D}} ( \mathcal{D}_e \oplus \mathcal{D}_g ) $, so it also determines the isomorphism class of $\mathcal{D}$.

Reciprocally, this decomposition provides a way to find a graded division algebra for a given $ \nu : T \setminus K \longrightarrow \{ \pm 1 \} $.
Indeed, if for example $ \nu (g) = +1 $, we can take $ M_2(\mathbb{R}) \otimes M_{n/2}(\mathbb{R}) $, where $M_2(\mathbb{R})$ is graded as in Equation \eqref{eq-M2R-dim2} with support $ \langle g \rangle $, and $M_{n/2}(\mathbb{R})$ has the $K$-division grading of the associated quadratic form $\mu_g$ of $\nu$ in $g$.
Finally, all such gradings are equivalent, because we can always find a $ g' \in T \setminus K $ such that $ \nu(g') = +1 $, and we know that all the $\mathbb{Z}_2^{2m-2}$-division gradings on $M_{n/2}(\mathbb{R})$ belong to one equivalence class.

The arguments for $ \mathcal{D} \cong M_n(\mathbb{H}) $ are exactly the same, but now the representatives of the isomorphism classes have the form ($ T \cong \mathbb{Z}_2 \times \mathbb{Z}_2^{2m} $):
\[ M_n(\mathbb{H}) \cong \left\{
	\begin{array}{l}
		M_2(\mathbb{R}) \otimes M_{n/2}(\mathbb{H}) \\
		\mathbb{H} \otimes M_n(\mathbb{R})
	\end{array}
\right. \]
For $ \mathcal{D} \cong M_n(\mathbb{C}) $ and $ T \cong \mathbb{Z}_2 \times \mathbb{Z}_2^{2m-1} $ the same reasoning gives:
\[ M_n(\mathbb{C}) \cong \left\{
	\begin{array}{l}
		M_2(\mathbb{R}) \otimes M_{n/2}(\mathbb{C}) \\
		\mathbb{H} \otimes M_{n/2}(\mathbb{C})
	\end{array}
\right. \]

The last case is $ \mathcal{D} \cong M_n(\mathbb{C}) $ and $ T \cong \mathbb{Z}_2 \times ( \mathbb{Z}_2^{2m-3} \times \mathbb{Z}_4 ) $.
If the support $ K = \mathrm{supp} ( C_{\mathcal{D}} ( \mathcal{D}_e ) ) $ is different from $ T_2 $, we can fix a $ g \in T \setminus K $ of order $2$, and argue as before to decompose $ \mathcal{D} $ in the form:
\[ M_n(\mathbb{C}) \cong \left\{
	\begin{array}{l}
		M_2(\mathbb{R}) \otimes M_{n/2}(\mathbb{C}) \\
		\mathbb{H} \otimes M_{n/2}(\mathbb{C})
	\end{array}
\right. \]
The only difference with the previous cases is that we can only define $\nu$ on $ T_2 \setminus ( K \cap T_2 ) $.
To determine the isomorphism class of $ C_{\mathcal{D}} ( \mathcal{D}_e \oplus \mathcal{D}_g ) $ we need also its associated alternating bicharacter, $ \beta : K \times K \longrightarrow \{ \pm 1 \} $ (see Theorem \ref{thm-MnC-dim1}.(2)).
$\beta$ is an invariant because it comes from the commutation relations of the $\mathcal{D}_e$-algebra $ C_{\mathcal{D}} (\mathcal{D}_e) $, that is, it is defined by the following equation:
\[ X_u X_v = \beta (u,v) X_v X_u \]
For all $ u,v \in K $, $ X_u \in \mathcal{D}_u $ and $ X_v \in \mathcal{D}_v $.

Finally we are going to prove \ref{thm-MnC-dim2}.2.(ii).
Assume that $ \mathcal{D} \cong M_n(\mathbb{C}) $, $ T \cong \mathbb{Z}_4 \times \mathbb{Z}_2^{2m-2} $ and $ \mathrm{supp} ( C_{\mathcal{D}} ( \mathcal{D}_e ) ) = T_2 $.
Fix a $ g \in T \setminus T_2 $, and a subgroup $A$ of $T_2$ such that $ T_2 \cong \langle f \rangle \times A $, which is equivalent to $ T \cong \langle g \rangle \times A $.
Note that $g^2=f$, and that $f$ is the degree of $iI$, because the grading comes from a coarsening (see Remark \ref{rem-basis-alt-bich}, Lemma \ref{lem-iI} and Theorem \ref{thm-MnC-dim1}).
For any $ t \in T \setminus T_2 $ and $ 0 \neq Y \in \mathcal{D}_t $, $ Y^2 \in Z(\mathcal{D})_f = \mathbb{R}(iI) $, so $ Y^4 \in \mathbb{R}_{>0} I $, and we can normalize to get $ Y^4 = -I $.
By Equation \eqref{eq-rem-ref}, we can define $ \nu : T \setminus T_2 \longrightarrow \{ \pm 1 \} $ as the unique map that satisfies the equation:
\[ Y^2 = \nu(t) X^2 \]
For all $ t \in T \setminus T_2 $, and normalized elements $ X \in \mathcal{D}_g $ and $ Y \in \mathcal{D}_t $, that is, $ X^4 = Y^4 = -I $.
Note that if we had fixed a different $g$, we could have $-\nu$ instead of $\nu$, but $[\nu]$ is an invariant of the isomorphism class of $\mathcal{D}$.
We will denote by $\mu_g$ the associated map of $\nu$ in $g$ (see Lemma \ref{lem-nice}).

We will prove later that we can decompose $\mathcal{D}$ as:
\begin{equation}\label{eq-MnC-dim2-2b-dec}
M_n(\mathbb{C}) \cong \mathcal{D} \cong C_{\mathcal{D}} (\mathcal{F}) \otimes \mathcal{F} \cong \left\{
	\begin{array}{l}
		M_2(\mathbb{C}) \otimes M_{n/2}(\mathbb{R}) \\
		M_2(\mathbb{C}) \otimes M_{n/4}(\mathbb{H})
	\end{array}
\right.
\end{equation}
Where $\mathcal{F}$ ($ \cong M_{n/2}(\mathbb{R}) $ or $M_{n/4}(\mathbb{H})$) is a graded division subalgebra with support $A$, while $C_{\mathcal{D}}(\mathcal{F})$ ($ \cong M_2(\mathbb{C}) $) is graded as in Equation \eqref{eq-M2C-dim2} with support $ \langle g \rangle $.
If $ \mu : A \longrightarrow \{ \pm 1 \} $ is the quadratic form (of type I) associated to $\mathcal{F}$, then $  \nu(ga) = \nu(g) \mu(a) $ for all $ a \in A $; therefore $\mu$ is the restriction of $\mu_g$ to $A$ and $[\nu]$ determines the isomorphism class of $\mathcal{D}$.

Now we are going to show that $\nu$ is a nice map of type II with semineutral element $f$.
We can consider the element $iI$, which has degree $f$, to obtain:
\begin{equation}\label{eq-MnC-dim2-2b-nice}
\text{For all } a \in A \text{, } \nu(gfa) = -\nu(ga) \text{, and so } \mu_g(fa) = -\mu_g(a) \text{.}
\end{equation}
Define $ \beta : T_2 \times T_2 \longrightarrow \{ \pm 1 \} $ by $ \beta(u,v) := \mu_g(uv) \mu_g(u)^{-1} \mu_g(v)^{-1} $ (for all $ u,v \in T_2 $).
We know that the restriction of $\beta$ to $ A \times A $ is an alternating bicharacter of type I, and applying \eqref{eq-MnC-dim2-2b-nice} we check that $ \beta(fa_1,fa_2) = \beta(fa_1,a_2) = \beta(a_1,fa_2) = \beta(a_1,a_2) $ for all $ a_1,a_2 \in A $.
Therefore $\beta$ is an alternating bicharacter of type II with semineutral element $f$.
Finally we note that $ \mu_g(f) = -1 $, because of \eqref{eq-MnC-dim2-2b-nice}.

Reciprocally, there is a graded division algebra for a given $[\nu]$:
Up to isomorphism, it is either $ M_2(\mathbb{C}) \otimes M_{n/2}(\mathbb{R}) $ or $ M_2(\mathbb{C}) \otimes M_{n/4}(\mathbb{H}) $, where $M_2(\mathbb{C})$ is graded as in Equation \eqref{eq-M2C-dim2} with support $ \langle g \rangle $, and the grading of $ M_{n/2}(\mathbb{R}) $ or $ M_{n/4}(\mathbb{H}) $ is defined by the restriction to $A$ of the associated quadratic form of $\nu$ in $g$ (the associated quadratic forms of $\nu$ and $-\nu$ coincide).
Indeed, if $\nu'$ is the nice map associated to this algebra, then \eqref{eq-MnC-dim2-2b-nice} implies that $ [\nu'] = [\nu] $.

Now we will prove \eqref{eq-MnC-dim2-2b-dec}.
Consider a refinement of the grading induced by a homogeneous element $X$ of degree $g$, with support $ T' := \langle h \rangle \times \langle g \rangle \times A $, as in Remark \ref{rem-ref}.
Let $(\beta',\mu')$ be the associated pair (see Theorem \ref{thm-MnC-dim1}), $ T'_2 := \langle h \rangle \times \langle f \rangle \times A $, and $\mathcal{F}$ the graded subalgebra of support $ A \leq T' $ (that is, the direct sum of the homogeneous components of the refinement of degree in $A$), which is also a graded subalgebra relative to our original grading.

As the homogeneous elements that commute with $J$ are those whose degree has order $1$ or $2$, and $J$ has degree $h$ in this new grading, $ \langle h,f \rangle \subseteq \mathrm{rad} ( \beta' \vert_{ T'_2 \times T'_2 } ) $.
But Remark \ref{rem-basis-alt-bich} tells us that this radical has exactly $4$ elements, so $ \langle h,f \rangle = \mathrm{rad} ( \beta' \vert_{ T'_2 \times T'_2 } ) $.
Therefore the restriction of $\mu'$ to $ A \leq T'_2 $ has type I.
This restriction determines $\mathcal{F}$, so $ \mathcal{F} \cong M_{n/2}(\mathbb{R}) $ or $ M_{n/4}(\mathbb{H}) $, and in particular $\mathcal{F}$ is central simple.
Therefore $ \mathcal{D} \cong C_{\mathcal{D}} (\mathcal{F}) \otimes \mathcal{F} $, by the Double Centralizer Theorem.

Moreover, the homogeneous elements of degree in $ \{ e \} \times \langle g \rangle \times A \leq T' $ commute with $X$, so $ X \in C_{\mathcal{D}}(\mathcal{F}) $; while those of degree in $ \langle h \rangle \times \langle f \rangle \times A \leq T' $ commute with $J$, so also $ J \in C_{\mathcal{D}}(\mathcal{F}) $.
We deduce that the support of $ C_{\mathcal{D}}(\mathcal{F}) $ is $ \langle g \rangle \leq T $ and that $ C_{\mathcal{D}}(\mathcal{F}) $ is isomorphic to $M_2(\mathbb{C})$ graded as in Equation \eqref{eq-M2C-dim2}, because $XJ=-JX$ and $X^4=-I$.
This finishes the proof of \eqref{eq-MnC-dim2-2b-dec}.

To conclude, we analyze the equivalence classes.
We must show that two graded algebras of the form \eqref{eq-MnC-dim2-2b-dec} are equivalent.
This is clear except if one is of the form $ M_2(\mathbb{C}) \otimes M_{n/2}(\mathbb{R}) $ with support $T^1$ and the other is of the form $ M_2(\mathbb{C}) \otimes M_{n/4}(\mathbb{H}) $ with support $T^2$.
In that case we consider, with the previous notation, the refinements of supports $ (T^1)' = \langle h_1 \rangle \times T^1 = \langle h_1 \rangle \times \langle g_1 \rangle \times A_1 $ and $ (T^2)' = \langle h_2 \rangle \times T^2 = \langle h_2 \rangle \times \langle g_2 \rangle \times A_2 $.
By Theorem \ref{thm-MnC-dim1}, we know that these refinements are equivalent, that is, there exist an isomorphism of algebras $ \varphi : M_2(\mathbb{C}) \otimes M_{n/2}(\mathbb{R}) \longrightarrow M_2(\mathbb{C}) \otimes M_{n/4}(\mathbb{H}) $ and a group isomorphism $ \alpha : \langle h_1 \rangle \times \langle g_1 \rangle \times A_1 \longrightarrow \langle h_2 \rangle \times \langle g_2 \rangle \times A_2 $ such that $\varphi$ sends homogeneous elements of degree $t$ to homogeneous elements of degree $\alpha(t)$.
But, as we noted in Remark \ref{rem-basis-alt-bich}, $h_1$ and $h_2$ are distinguished elements, so necessarily $ \alpha(h_1) = h_2 $ and $\alpha$ induces an isomorphism $ T^1 = (T^1)' / \langle h_1 \rangle \longrightarrow T^2 = (T^2)' / \langle h_2 \rangle $.
Therefore, we can take as our desired equivalence the same $\varphi$.
Finally, the gradings of \ref{thm-MnC-dim2}.2.(i) are not equivalent to those of \ref{thm-MnC-dim2}.2.(ii), since only in the first case the support of the centralizer of the neutral component has elements of order 4.
\end{proof}

\section*{Acknowledgements and license}

I want to thank the advice of my thesis supervisor, Alberto Elduque; especially for his ideas, that have been instrumental in the development of this paper.

Supported by a doctoral grant of the Diputaci\'on General de Arag\'on.
Also supported by the Spanish Ministerio de Econom\'ia y Competitividad and the Fondo Europeo de Desarrollo Regional FEDER (MTM2013-45588-C3-2-P); and by the Diputaci\'on General de Arag\'on and the Fondo Social Europeo (Grupo de Investigaci\'on de \'Algebra).

This work is licensed under the Creative Commons Attribution-NonCommercial-NoDerivatives 4.0 International License. To view a copy of this license, visit http://creativecommons.org/licenses/by-nc-nd/4.0/ or send a letter to Creative Commons, PO Box 1866, Mountain View, CA 94042, USA.

\end{document}